\newtheorem{thm}{Theorem}[section]
\newtheorem{lem}[thm]{Lemma}
\newtheorem{cor}[thm]{Corollary}
\newtheorem{exam}[thm]{Example}
\newtheorem{defn}[thm]{Definition}
\newtheorem{problem}{Problem}
\newenvironment{pf}{\prepf\rm}{\endprepf}
\newcommand{\Ker}{\mathop{\mathrm{ker}}\nolimits}
\newcommand{\twist}[1]{^#1\kern-0.15em}
\newcommand{\pxl}{\mathop{\mathrm{PXL}}}
\newcommand{\Aut}{\mbox{\rm Aut\,}}
\newcommand{\rank}{\operatorname{rank}}
\newcommand{\trans}{\mathcal{T}_{n}}
\newcommand{\sym}{\mathcal{S}_{n}}
\newcommand{\agl}{\mbox{\rm AGL}}
\newcommand{\psl}{\mbox{\rm PSL}}
\newcommand{\pgl}{\mbox{\rm PGL}}
\newcommand{\pgaml}{\mbox{\rm P}\Gamma {\rm L}}
\newcommand{\agaml}{\mbox{\rm A}\Gamma {\rm L}}
\newcommand{\psigl}{\mbox{\rm P}\Sigma {\rm L}}
\newcommand{\GF}{\mbox{\rm GF}}
\newenvironment{example}{\preex\rm}{\endpreex}
\begin{document}



\title[Orbits of $k$-homogenous groups]{Orbits of Primitive $k$-Homogenous Groups on $(n-k)$-Partitions with Applications to Semigroups}
\author{Jo\~{a}o Ara\'{u}jo}
\author{Peter J. Cameron}

\address[Ara\'{u}jo]
{Universidade Aberta
and
CEMAT-CI\^{E}NCIAS \\ 
Departamento de Matem\'{a}tica, Faculdade de Ci\^{e}ncias\\
 Universidade de Lisboa, 1749-016, Lisboa, Portugal
}
\email{\url{jaraujo@ptmat.fc.ul.pt}}

\address[Cameron]{School of Mathematics and Statistics\\
University of St Andrews\\St Andrews, Fife KY16 9SS\\U.K.}
\email{\url{pjc20@st-andrews.ac.uk}}


\begin{abstract}{
Let $X$ be a finite set such that $|X|=n$, and let $k< n/2$. A group is $k$-homogeneous if it has only one orbit on the sets of size $k$. The aim of this paper is to prove some general results on permutation groups and then apply them to transformation semigroups. On groups we find the minimum number of permutations needed to generate $k$-homogeneous groups (for $k\ge 1$); in particular we show that $2$-homogeneous groups are $2$-generated. We also describe the orbits of $k$-homogenous groups on partitions  with $n-k$ parts, classify the $3$-homogeneous groups $G$ whose orbits on $(n-3)$-partitions are invariant under the normalizer of $G$ in  $S_n$, and describe the normalizers of $2$-homogeneous groups in the symmetric group.
Then these results are applied to extract information about transformation semigroups with given group of units, namely to prove results on their automorphisms and on the minimum number of generators. The paper finishes with some problems on permutation groups, transformation semigroups and computational algebra.  }
\end{abstract}

\medskip

\maketitle

\noindent{\em Date:} 17 December 2015\\
{\em Key words and phrases:} Transformation semigroups, regular semigroups, permutation groups, primitive
groups, homogeneous groups, rank of semigroups, automorphisms of semigroups.\\
{\it 2010 Mathematics Subject Classification:}  20B30, 20B35,
20B15, 20B40, 20M20, 20M17. \\
{\em Corresponding author: Jo\~{a}o Ara\'{u}jo, jjaraujo@fc.ul.pt}






\section{Introduction}
Let $T_n$ be the full transformation monoid on $n$ points and let $S_n$ be its group of units, the symmetric group. The rank of a transformation $t\in T_n$ is the size of its image, and is denoted by $\rank(t)$. We say that a partition $P=(A_1,\ldots,A_k)$, where the $A_i$s {are in a sequence of non-increasing  sizes},  is of type $(|A_1|,\ldots, |A_k|)$. Given two sets $A,B\subseteq T_n$, we denote by $\langle A,B\rangle$ the semigroup generated by $A\cup B$; in case $B=\{t\}$, we will abuse notation writing $\langle A,t\rangle$, rather than $\langle A,\{t\}\rangle$.  

In what group theory concerns, this paper investigates  the minimal generating sets of some permutation groups $G\le S_n$ and their orbits on $(n-k)$-partitions (for $k\ge n/2$). For example, in the particular case of  $k=1$, the general problem we are considering reads as  the study of the orbits of $G$ on $(n-1)$-partitions, that is, the study of the orbits of $G$ on $2$-sets (a slight generalization of the key concept of \emph{orbitals}).  The main results for groups are of the following form (with $m$ and $m'$ appearing in several tables): 

\begin{thm}	Let $k\le \frac{n}{2}$ and let 
 $G\le S_n$ be a primitive $k$-homogenous group. Then,
 \begin{itemize}
 \item $G$ has $m$ orbits on the set of $(n-k)$-partitions; 
 \item the smallest number of elements needed to generate $G$ is $m'$. 
 \end{itemize}  
\end{thm}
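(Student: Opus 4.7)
The plan is to proceed by case analysis using the CFSG-based classification of primitive $k$-homogeneous groups. Livingstone--Wagner's reduction theorem tells us that for $k\ge 5$ any $k$-homogeneous group is $k$-transitive, and hence falls under the CFSG classification of $k$-transitive groups; Kantor completed the classification of primitive $k$-homogeneous groups that fail to be $k$-transitive for $k\in\{2,3,4\}$. This produces a short list organised into a few infinite families ($S_n$, $A_n$, one-dimensional projective groups such as $\PSL(2,q)$ and $\PGL(2,q)$ and their extensions by field automorphisms, and certain one-dimensional affine groups) together with finitely many sporadic examples (the Mathieu groups and a handful of exceptional small-degree groups). The theorem is then proved by inspecting each entry on this list, computing the orbit count on $(n-k)$-partitions and the minimum generator count, and recording these as $m$ and $m'$ in the tables referenced in the statement.

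For the orbit count, the key observation is that the multiset of part-sizes of an $(n-k)$-partition is a $G$-invariant: it is a partition of $n$ with exactly $n-k$ parts, and subtracting $1$ from each part yields a partition $\mu$ of $k$ with at most $n-k$ parts. The orbits of $G$ split according to this shape, so for each $\mu$ one counts orbits on $(n-k)$-partitions of shape $\mu$ separately. Because $G$ is transitive on $k$-subsets, and an $(n-k)$-partition is determined by the union $S$ of its non-singleton blocks (with $|S|\le 2k$) together with the set-partition of $S$, one may fix a representative $k$-subset and reduce the count to orbits of its setwise stabiliser on a small family of marked configurations. For $S_n$ and $A_n$ this yields exactly the number of partitions of $k$; for the one-dimensional projective and affine families it uses cross-ratio and square-class invariants in $\GF(q)$; for the sporadic entries it is a finite check.

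For the minimum number of generators I would combine three ingredients: (i) every finite simple group is $2$-generated, and standard arguments extend this to all almost simple groups arising in the classification, giving $m'\le 2$ in the generic case; (ii) a non-cyclic group needs at least two generators, and the only cyclic primitive groups (prime-order regular translations) are $2$-homogeneous only for degrees $\le 3$, so typically $m'=2$; (iii) the handful of entries with larger point stabilisers (certain affine groups, and a few small-degree exceptions) must be checked directly to see whether $m'$ rises above $2$. Combining (i)--(iii) pins down $m'$ on every entry of the list.

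The main obstacle will be the combinatorial bookkeeping for the infinite one-dimensional families: the orbit counts on $(n-k)$-partitions depend sensitively on arithmetic conditions on $q$ (parity, whether $-1$ is a square, and related square-class conditions), so the tables must be organised carefully and every boundary value checked. The sporadic and small-degree exceptions are finite computations but are error-prone and will need to be cross-checked, for example by direct enumeration in a computer algebra system.
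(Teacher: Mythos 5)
Your strategy is essentially the paper's for $k\ge2$: reduce to the Livingstone--Wagner/Kantor/CFSG list, count orbits family by family, and argue $2$-generation. However, there are two genuine gaps. First, the statement allows $k=1$, where ``$1$-homogeneous'' means transitive and the primitive groups in question are \emph{all} primitive groups; there is no classification to inspect, the orbit count on $(n-1)$-partitions is the number $n_2(G)$ of orbits on $2$-subsets (which varies from $1$ up to $(n-1)/2$), and the minimum number of generators is not $2$ but can grow like $C\log n/\sqrt{\log\log n}$. Your three ingredients for the generator count can only ever output ``$2$'' or a finite check, so the $k=1$ row of the theorem is simply not reachable by your method; the paper handles it by quoting the McIver--Neumann bound and its refinement by Lucchini, Menegazzo and Morigi for transitive and primitive groups.

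Second, your ingredient (i) --- ``every finite simple group is $2$-generated, and standard arguments extend this to all almost simple groups'' --- is not a standard fact and is where the real content lies. The paper's proof runs through the Lucchini--Menegazzo theorem that a non-cyclic finite group $G$ with a unique minimal normal subgroup $M$ satisfies $d(G)=\max\{2,d(G/M)\}$, and then must still verify $d(G/M)\le2$; for socle $\mathrm{PSL}(d,q)$ this uses the specific fact that a $2$-transitive group contains no graph automorphisms. In the affine case the argument is a second application of the same theorem to the linear quotient $H$ (when $H$ has normal subgroup $\mathrm{SL}(d,q)$, $\mathrm{Sp}(d,q)$ or $G_2(q)$), the metacyclic observation for one-dimensional semi-affine groups, and explicit computation (in \textsf{GAP}) for the finitely many remaining linear groups. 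Your proposal waves at this with ``a handful of entries must be checked directly,'' but without the unique-minimal-normal-subgroup lemma there is no mechanism to pass from $2$-generation of the socle to $2$-generation of $G$ itself, and that passage is the theorem's main point for $k=2$. The orbit-counting half of your plan is sound, if heavier than necessary: the paper obtains the $O(n)$, $O(n^2)$, $O(n^3)$ bounds for $k=2,3$ by elementary counting (each $2$-set lies in $n-2$ triples, etc.) without invoking the classification at all, reserving the cross-ratio analysis for the normalizer question.
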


{ A consequence of the previous result is that $2$-homogeneous groups are $2$-generated. That this result seems to have been unnoticed in the past, is perhaps the explanation for the fact that a very optimized algebra system such as GAP \cite{GAP} provides sets of generators for the $2$-transitive groups that in about $2/3$ of the cases have more than $2$ elements. 

The next main group theory result,  Theorem \ref{main3}, builds upon the previous and  provides a list  of 
$3$-homogeneous groups whose orbits on a given $(n-3)$-partition coincide
with those of their normalizers. (The list is complete except for one unresolved family.)}

{In addition to dealing with natural questions on permutation groups, these results were crucial  in order to generalize some  semigroup theory results,  as shown below in the sample Theorems \ref{sample2} and \ref{sample}, that we now introduce. 

In his very influential and cited paper \cite{mcalister} McAlister proved the following. 

\begin{thm}\cite{mcalister}
Let $G\le S_n$ and  $t\in T_n$ be any map of rank $n-1$.  Then $\langle G,t\rangle$ generates all rank $n-1$  transformations in $T_n$  if and only if the group $G$ has only one orbit on the $(n-1)$-partitions of $\{1,\ldots ,n\}$.
\end{thm}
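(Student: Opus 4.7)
The plan is to handle the two directions separately; the backward direction is the more delicate one.

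For $(\Rightarrow)$, assume every rank-$(n-1)$ transformation lies in $\langle G, t\rangle$. I would use the standard fact that multiplication by a permutation in $T_n$ preserves the rank on either side, while an application of $t$ can only drop the rank. Hence a rank-$(n-1)$ element written as a word $s = g_0\, t\, g_1\, t \cdots t\, g_k$ with $g_i \in G$ must have rank exactly $n-1$ at every stage after the first occurrence of $t$. A short computation with the identity $\ker(fg) = g^{-1}(\ker f)$ (or its transpose, depending on composition convention) then shows that each subsequent $t$ also preserves the kernel, so $\ker s$ equals $g^{-1}(\ker t)$ for some $g \in G$, lying in the $G$-orbit of $\ker t$. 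Since every $(n-1)$-partition of $\{1,\ldots,n\}$ is the kernel of some rank-$(n-1)$ map and hence (by hypothesis) of some element of $\langle G, t\rangle$, this orbit must be all of them.

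For $(\Leftarrow)$, assume $G$ has one orbit on the $(n-1)$-partitions. These partitions correspond bijectively to $2$-subsets (via the unique non-singleton block), so the hypothesis is equivalent to $G$ being $2$-homogeneous; for $n \ge 3$, this forces transitivity on points and hence on $(n-1)$-subsets. Given a target rank-$(n-1)$ map $s$ with kernel $P$ and image $I$, I would first choose $g_2 \in G$ so that $t g_2$ has kernel $P$ (possible by the orbit hypothesis) and $g_1 \in G$ so that $g_1 t g_2$ has image $I$ (possible by transitivity on $(n-1)$-subsets). Thus $g_1 t g_2 \in \langle G, t\rangle$ lies in the $\mathcal{H}$-class $H_{P,I}$ of rank-$(n-1)$ maps with kernel $P$ and image $I$.

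The remaining, and principal, obstacle is to prove that the entire class $H_{P,I}$, a set of size $(n-1)!$ on which $\Sym(I) \cong S_{n-1}$ acts regularly, is contained in $\langle G, t\rangle$. Here I would invoke Green's lemma: for two $\mathcal{H}$-classes within a common $\mathcal{D}$-class, appropriate left- or right-multiplication induces a bijection between them. By assembling longer words $g_0 t g_1 t g_2 \cdots$ whose intermediate products traverse other $\mathcal{H}$-classes of the rank-$(n-1)$ $\mathcal{D}$-class and eventually return to $H_{P,I}$, one realises a growing subgroup of $\Sym(I)$ inside $\langle G, t\rangle \cap H_{P,I}$. The heart of the proof is to check that this subgroup is actually all of $\Sym(I)$: the $2$-homogeneity of $G$ provides enough mobility between $\mathcal{H}$-classes at the level of kernels and images (through the stabilisers $G_{\ker t}$ and $G_{\im t}$), but verifying that the interplay between these stabiliser actions and the bijection structure of $t$ generates the full symmetric group, rather than a proper subgroup, is the subtle step.
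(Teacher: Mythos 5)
This theorem is quoted from McAlister's paper and is not proved in the text; the closest the paper comes is Theorem \ref{main3.3} in Section \ref{sem1}, whose machinery, specialised to kernel type $(2,1,\ldots,1)$, yields a proof of the harder direction. Your forward direction is fine: rank considerations force every rank-$(n-1)$ element of $\langle G,t\rangle$ to have kernel in the $G$-orbit of $\ker t$, so if all rank-$(n-1)$ maps occur, that orbit is everything.

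The backward direction, however, has a genuine gap, and you have named it yourself without closing it: you reach one element of each $\mathcal{H}$-class $H_{P,I}$ and then say that verifying that the subgroup of $\Sym(I)$ so obtained is all of $\Sym(I)$ ``is the subtle step.'' That step is precisely the content of the theorem, and nothing in your sketch supplies it; a priori the words $g_0tg_1t\cdots$ returning to $H_{P,I}$ could generate a proper subgroup. The paper's route (in the proof of Theorem \ref{main3.3}) closes this by an explicit device: first, the single-orbit hypothesis gives, for every $(n-1)$-partition $Q$ and every transversal $Y$ of $Q$, an \emph{idempotent} in $\langle G,t\rangle$ with kernel $Q$ and image $Y$ (take $g^{-1}th$ with that kernel and image, observe $\rank(tht g^{-1}\cdots)$ is preserved since $Y$ is a transversal, and pass to the idempotent power of $thg^{-1}$). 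Then, for a rank-$(n-1)$ map $p$ and $x,y\in Xp$, one picks the point $c\notin Xp$ and factors the transposition $(x\,y)$ acting on $Xp$ as $pe_1e_2e_3$, where $e_1,e_2,e_3$ are idempotents whose non-singleton kernel classes are $\{x,c\}$, $\{y,x\}$, $\{c,y\}$ respectively, mapped to $c$, $x$, $y$. Since transpositions generate $\Sym(I)$, the whole $\mathcal{H}$-class lies in $\langle G,t\rangle$. Without this (or an equivalent) concrete argument your proof does not go through; I recommend you either adopt the three-idempotent factorisation or find another explicit generation argument for $\Sym(I)$.
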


Another important result is due to Levi.

\begin{thm}\cite{Le96}
Let $A_n \le G \le S_n$ and let $t \in T_n\setminus S_n$. Then the automorphism group of $\langle G,t\rangle$ is isomorphic to $S_n$.  
\end{thm}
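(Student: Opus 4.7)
\begin{pf}
Write $S = \langle G, t \rangle$. My plan is to realize every automorphism of $S$ as conjugation by an element of $S_n$, thereby establishing $\Aut(S) \cong S_n$. The first step is to identify the units of $S$: since $\rank(t) \le n-1$ and composition in $T_n$ cannot increase rank, any word in $G \cup \{t\}$ involving $t$ has rank less than $n$ and so is not a permutation, whence the units of $S$ are exactly $G$. Consequently every $\phi \in \Aut(S)$ fixes $G$ setwise, yielding a restriction homomorphism $\rho \colon \Aut(S) \to \Aut(G)$; for $n \neq 6$, $\Aut(G) \cong S_n$ and is realized by conjugation inside $S_n$.

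Second, I would construct a large supply of automorphisms of $S$ from $S_n$. Define $\Phi \colon S_n \to \Aut(S)$ by $\Phi(\sigma)(s) = \sigma s \sigma^{-1}$. Since $A_n \trianglelefteq S_n$, we have $\sigma G \sigma^{-1} = G$, so well-definedness reduces to $\sigma t \sigma^{-1} \in S$. This is immediate when $\sigma \in G$. For $G = A_n$ and $\sigma$ odd, I would exploit that $\rank(t) < n$ forces a kernel block $\{a,b\}$ of size at least $2$, so $(ab)t = t$ and thus $\sigma t = (\sigma(ab))t \in A_n \cdot t \subseteq S$. A parallel argument on the image side --- using a pair of points outside $\im(t)$ when $\rank(t) \le n-2$, or a direct matching computation inside $A_n \cdot t \cdot A_n$ when $\rank(t) = n-1$ --- shows $\sigma t \sigma^{-1} \in S$ in all cases. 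The homomorphism $\Phi$ is injective because $C_{S_n}(G) = 1$ for $n \ge 3$.

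The heart of the proof would be to show that $\rho$ is injective, since combining this with the previous step gives $\Aut(S) = \Phi(S_n) \cong S_n$. Suppose $\phi|_G = \operatorname{id}$ and set $t' = \phi(t)$. Every semigroup relation $w(t,G) = w'(t,G)$ holding in $S$ must persist with $t$ replaced by $t'$. In particular, the left stabilizer $\{g \in G : gt = t\}$ --- which equals the subgroup of $G$ preserving every kernel block of $t$ --- must coincide with the corresponding stabilizer of $t'$. Since $G \supseteq A_n$, for $n \ge 4$ these $G$-stabilizers determine the kernel partition, so $\ker(t) = \ker(t')$; analogously, right stabilizers give $\im(t) = \im(t')$. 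Finally, examining $\phi(gt)$ for $g$ ranging over a transversal of left-stabilizers would pin down the matching between kernel blocks and image points, forcing $t' = t$. The case $n = 6$ requires additionally ruling out the outer automorphism of $S_6$, which can be done by noting it permutes conjugacy classes of $S_6$ in a way incompatible with their action on the kernel partition of any non-permutation $t$.

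The main obstacle is this last step: extracting from the limited semigroup relations available in $S$ (only products mixing $G$ and $t$) enough rigidity to reconstruct $t$ from $G$-equivariance alone. The rank $n-1$ case is particularly delicate, since the ``two points outside the image'' trick is unavailable and one must argue directly via the matching of kernel blocks to image elements.
\end{pf}
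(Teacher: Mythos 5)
The paper does not actually prove this statement: it is quoted from Levi \cite{Le96} as background, and the closest in-paper arguments (Theorem~\ref{illustr}, items on automorphisms, and Theorem~\ref{lastmain}(2)) take a completely different and much shorter route. There one shows that $\langle G,t\rangle$ contains all the constant maps (true here, since $A_n$ is $2$-homogeneous and $2$-homogeneous groups synchronize every singular map), invokes Sullivan's theorem \cite{Sullivan} that every automorphism of a transformation semigroup containing the constants is induced by conjugation by an element of its normalizer in $S_n$, and then observes that this normalizer is all of $S_n$, because $G\trianglelefteq S_n$ and $\langle G,t\rangle\setminus G=\langle S_n,t\rangle\setminus S_n$ is invariant under conjugation by every permutation; faithfulness is free because conjugation permutes the constant maps exactly as $S_n$ permutes points. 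So your proposal is a genuinely different, from-scratch approach rather than a variant of the paper's.

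As it stands your argument has a real gap precisely where you locate ``the heart.'' The claim that the left stabilizer $\{g\in G:gt=t\}$ determines $\ker(t)$ fails for $G=A_n$ when $t$ has rank $n-1$: the only nontrivial permutation preserving every kernel class is the transposition of the unique doubleton class, which is odd, so the stabilizer inside $A_n$ is trivial and carries no information about the partition. (You flag rank $n-1$ as ``delicate,'' but the mechanism you propose is not merely delicate there --- it breaks; one must pass to two-sided data such as $\{(g,h):gth=t\}$, or to the idempotents of $S$, to recover $\ker(t)$.) The final step --- recovering the bijection between kernel classes and image points to force $t'=t$ --- is only sketched and is explicitly left as an obstacle, so injectivity of $\rho$ is not established. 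A smaller slip: $C_{S_3}(A_3)=A_3\neq1$, so injectivity of $\Phi$ for $n=3$, $G=A_3$ needs the action on $t$ as well as on $G$. If you want a complete and short proof, the constants-plus-Sullivan argument used elsewhere in the paper is the efficient path.
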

 
The aim of this paper is to use the group theory results referred to above to generalize these results as follows.

 \begin{thm}\label{sample2}
Let $t$ be a singular map in $T_n$, and suppose that $t$ has kernel type $(l_1,\ldots,l_k)$, with $k\ge n/2$; let $G$ be a group having only one orbit in the partitions of that type. Let $S=\langle t,G\rangle\setminus G$. Then
\begin{enumerate}
\item the automorphisms of $\langle a,G\rangle$ are those induced under conjugation by the elements of the normalizer of $G$ in $S_n$, $$\mbox{Aut}(\langle t,G\rangle)\cong\mbox{N}_{S_n}(G);$$ 
 {
\item if $k\le n-2$, then $\langle G,t\rangle$ is generated by $3$ elements;
\item  let $A$ be a set of rank $k$ maps such that $\langle A,G\rangle$ generates all maps of rank at most $k$ and $A$ has minimum size among the sets with that property. Then $|A|$ is given in Table \ref{t:gen}.} 
\end{enumerate}
\begin{table}[htbp]
\[
\begin{array}{|c|c|c|}		
\hline
\hbox{rank} & \hbox{partition type} & |A|\\
\hline \hline 
n-1 & (2,1,\ldots,1) & 1 \\ 
\hline
n-2 & (2,2,1,\ldots,1) & 2 \\ 
    & (3,1,\ldots,1) & O(n) \\
\hline 
n-3 & (4,1,\ldots,1) & 144 \\
    & (3,2,1,\ldots,1) & 5  \\
    & (2,2,2,1,\ldots,1) & 3 \\ 
\hline
n-4 & (5,1,\ldots,1) & 15 \\
    & \hbox{other} & 5 \\
\hline    
k\ (n/2\le k\le n-5) & \hbox{any} & p(k) \\
\hline 
\end{array}
\]
\caption{\label{t:gen}Generating all maps of rank $k$}
\end{table}
\end{thm}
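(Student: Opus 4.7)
Write $S = \langle t, G\rangle$. The proof combines the group-theoretic results stated earlier in the paper (the orbit and generator counts for primitive $k$-homogeneous groups, and the normalizer-invariance results for $3$- and $2$-homogeneous groups) with standard semigroup arguments generalizing the theorems of McAlister and Levi from rank $n-1$ to arbitrary $k\ge n/2$. Two structural facts are used throughout: first, since $t$ is singular, the group of units of $S$ is precisely $G$, so $G$ is characteristic in $S$; second, the hypothesis that $G$ is transitive on partitions of type $(l_1,\ldots,l_k)$ means that every map with the same kernel type as $t$ lies in $GtG$.

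For part (1), the forward direction is routine: for $\sigma\in N_{S_n}(G)$, conjugation by $\sigma$ preserves $G$ setwise, and by the normalizer-invariance of orbits on the relevant partitions (Theorem \ref{main3} and its analogues for other $k$) also preserves $GtG$; hence conjugation by $\sigma$ is an automorphism of $S$. For the converse, let $\phi\in\aut(S)$. Since $G$ is characteristic, $\phi|_G\in\aut(G)$, and classical results on the automorphism groups of almost simple and affine primitive groups, together with the description of $N_{S_n}(G)$ given earlier in the paper, identify $\phi|_G$ with conjugation by some $\sigma\in N_{S_n}(G)$. Replacing $\phi$ by its composition with conjugation by $\sigma^{-1}$, we reduce to the case $\phi|_G=\mathrm{id}$; then $\phi(t)$ must intertwine the $G$-action trivially, and the orbit-invariance forces $\phi(t)=t$.

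For part (2), observe that when $k\le n-2$ the partition type has $l_1\ge 2$, and $G$-transitivity on partitions of this type yields $l_1$-homogeneity of $G$ (by taking the largest block, or unions of same-size largest blocks when $l_1=l_2$). By the Livingstone--Wagner theorem, $G$ is then $2$-homogeneous, and the main orbit theorem of the paper yields that $G$ is $2$-generated; adjoining $t$ gives $3$ generators for $S$. For part (3), a McAlister-style argument shows that $\langle A,G\rangle$ generates all maps of rank $\le k$ if and only if $A$ contains a rank-$k$ representative of every $G$-orbit on $k$-part partitions of $\{1,\ldots,n\}$; minimality then means $|A|$ equals this orbit count. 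The entries of Table \ref{t:gen} are read off from the orbit counts of the main theorem applied to each primitive $k$-homogeneous $G$: the generic entry $p(k)$ corresponds to the highly-homogeneous regime with one orbit per partition type, while the exceptional entries ($144$, $O(n)$, $15$, $\ldots$) come from specific Mathieu, affine, and projective groups whose actions on partitions of the given type split into many orbits.

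The main obstacle is part (1): identifying $\aut(G)$ with $N_{S_n}(G)$-conjugations requires case analysis for sporadic and small-degree primitive groups, and forcing $\phi(t)=t$ after the reduction $\phi|_G=\mathrm{id}$ depends essentially on the orbit-invariance results under the normalizer (which is precisely the role of Theorem \ref{main3} in the companion family for $k=3$). Part (3) is then bookkeeping once the orbit counts are available, but the exceptional constants (notably $144$ for type $(4,1,\ldots,1)$ and the $O(n)$ growth for type $(3,1,\ldots,1)$) demand careful enumeration of orbits for the sporadic $3$- and $4$-homogeneous groups.
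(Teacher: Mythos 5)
Your proposal diverges from the paper's proof at the decisive point, and the route you choose has real gaps. The paper's argument for part (1) rests on Theorem \ref{main3.3}: under the one-orbit-on-kernel-type hypothesis, $\langle t,G\rangle\setminus G=\langle t,S_n\rangle\setminus S_n$, proved by an explicit idempotent construction (Lemmas \ref{lem1a)}--\ref{corol3.7} plus the lemma writing a transposition of the image as a product of three idempotents). Two consequences do all the work: the semigroup contains all constant maps, so Sullivan's theorem says \emph{every} automorphism of $\langle t,G\rangle$ is conjugation by some $g\in S_n$ normalizing it; and the singular part is normalized by all of $S_n$, so the normalizer of $\langle t,G\rangle$ is exactly $N_{S_n}(G)$. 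No orbit-invariance result is needed --- your appeal to Theorem \ref{main3} belongs to the other setting (Theorem \ref{lastmain}, one orbit on images), where the conclusion is $\Aut\cong N_{S_n}(\langle G,t\rangle)$ rather than $N_{S_n}(G)$ precisely because the singular part need not be $S_n$-normal there. Your alternative --- restrict $\phi\in\Aut(S)$ to the characteristic subgroup $G$ and identify $\phi|_G$ with an $N_{S_n}(G)$-conjugation via the classification --- does not go through as stated: an abstract automorphism of $G$ need not be induced by any element of $S_n$ (it must preserve the conjugacy class of point stabilizers; $A_6$ on six points already has $|\Aut(G)|>|N_{S_6}(G)|$), and after reducing to $\phi|_G=\mathrm{id}$ the claim that ``orbit-invariance forces $\phi(t)=t$'' is asserted, not proved.

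A second concrete error: your structural fact that every map with the same kernel type as $t$ lies in $GtG$ is false. The double coset $GtG$ controls kernels and images separately but not the bijection between kernel classes and image points, so an arbitrary rank-$k$ map of the right type is generally only reachable through the longer idempotent products constructed in the proof of Theorem \ref{main3.3}. For parts (2) and (3) your outline matches the paper in spirit (transitivity on the partition type forces high homogeneity, Corollary \ref{2hom} gives $d(G)=2$, and $|A|$ is one representative per $G$-orbit on $k$-part partitions, read off from the orbit tables of Section \ref{groups}), but the sufficiency direction in part (3) again relies on the idempotent machinery of Theorem \ref{main3.3}, which your sketch does not supply.
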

{Table  \ref{t:gen} should be read as follows: if a group $G$ has, for example, one orbit on partitions of type $(3,1,\ldots,1)$, then we need a set $A$ with $O(n)$  maps  of rank $n-2$  so that the semigroup $\langle G,A\rangle$ generates all maps of rank no larger than $n-2$. }
 

In the previous theorem we require the group of units to be transistive on the kernel type of the singular map $t$. In the next theorem this condition is replaced by the weaker requirement of $G$ to be $|Xt|$-homogeneous. 
\begin{thm}\label{sample}
Let $G$ be a primitive group with just one orbit on $(n-k)$-sets, where
$1\le k\le n/2$. Let $t\in T_n$ be a rank $(n-k)$ map. Then
\begin{enumerate}
\item $\Aut(\langle G,t\rangle)\cong N_{S_n}(\langle G,t\rangle)$.
\item For $k\ge3$, the list of $3$-homogeneous groups that satisfy $$\Aut(\langle G,t\rangle)\cong N_{S_n}(G)$$ is the following: 
\begin{itemize}\itemsep0pt
\item $G=N_{S_n}(G)$, that is, 
\begin{itemize}\itemsep0pt
\item[(i)] $S_n$.
\item[(ii)] $\pgaml(2,q)$ for $k=3$.
\item[(iii)] $\agl(d,2)$ for $k=3$.
\item[(iv)] $\agaml(1,8)$,
$M_{11}$ ($k=4$), $M_{11}$ (degree~$12$, $k=3$), $M_{12}$ ($k=5$), $2^4:A_7$, $M_{22}:2$ ($k=3$),
$M_{23}$ ($k=4$), $M_{24}$ ($k=5$), and $\agaml(1,32)$ ($k=4$).
\end{itemize}
\item $G=A_n$;
\item $G=\agl(1,8)$, $\pgl(2,8)$, $\pgl(2,9)$, $M_{10}$, $\psl(2,11)$,
$M_{22}$, $\pxl(2,25)$, or $\pxl(2,49)$, with $k=3$ and $\lambda=(4,1,\ldots,1)$.
\end{itemize}
The list is complete with the possible exception of the groups $\pxl(2,q)$ for $q\ge169$. 
\item Let $A\subseteq T_n$ be a set of rank~$n-k$ maps such that $\langle G,A\rangle$ generates all
maps of rank at most $n-k$, and suppose $A$ has minimum size among the sets with that property. Then the size of   $A$ is bounded by the values in  Table~\ref{t:gens3b}. 
\end{enumerate}
\end{thm}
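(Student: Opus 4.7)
The plan is to establish each part in turn, building on the group-theoretic results in the earlier sections.

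For part~(1), since $t$ is singular of rank $n-k<n$, the group of units of $\langle G,t\rangle$ is exactly $G$. Any semigroup automorphism $\phi$ therefore restricts to a group automorphism of $G$. Because $G$ is primitive (and $k$-homogeneous with $k\ge 1$, hence transitive), its action on $X$ is rigid, in the sense that any automorphism of $G$ preserving the permutation action on $X$ is induced by conjugation by some element of $N_{S_n}(G)$. Extracting the underlying point permutation $\sigma$ from the action of $\phi$ on the rank-$1$ elements (or the idempotent $\mathcal{H}$-class structure) and checking that $\phi$ agrees with conjugation by $\sigma$ on all of $\langle G,t\rangle$ gives $\sigma\in S_n$. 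The invariance of $\langle G,t\rangle$ under $\phi$ is then equivalent to $\sigma\in N_{S_n}(\langle G,t\rangle)$, and the converse implication is immediate, yielding the stated isomorphism.

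For part~(2), the identification $\Aut(\langle G,t\rangle)\cong N_{S_n}(G)$ amounts, given part~(1), to the equality $N_{S_n}(\langle G,t\rangle)=N_{S_n}(G)$. This holds precisely when $N_{S_n}(G)$ preserves the $G$-orbit of $t$ on rank-$(n-k)$ maps, and this in turn reduces (by fixing a single image, which is legitimate since $k$-homogeneity gives transitivity on $(n-k)$-sets via complementation) to $N_{S_n}(G)$ preserving the $G$-orbit on kernel partitions of $t$. For $k=3$, Theorem~\ref{main3} already supplies the full list of $3$-homogeneous groups for which $G$ and $N_{S_n}(G)$ have the same orbits on $(n-3)$-partitions; combining this with the trivial self-normalizing cases, the case $G=A_n$, and a direct analysis of the $\pxl(2,q)$ family at kernel type $(4,1,\ldots,1)$ gives the list as stated. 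The main obstacle, and the reason the list is only conditionally complete, is the detailed analysis of $\pxl(2,q)$ for large $q$, which the statement explicitly defers for $q\ge 169$.

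For part~(3), I would invoke a McAlister-type criterion: $\langle G,A\rangle$ generates all maps of rank at most $n-k$ if and only if $A$ contains a representative of each $G$-orbit on rank-$(n-k)$ maps having a fixed image. Since $G$ has a single orbit on $(n-k)$-sets (by $k$-homogeneity and complementation), the minimum size of $A$ equals the number of $G$-orbits on rank-$(n-k)$ maps with any chosen image, which in turn is controlled by the orbits of a point stabilizer on the relevant $(n-k)$-partitions together with transversal data. Plugging in the orbit counts established in the first half of the paper, case by case for each primitive $k$-homogeneous group, yields the bounds recorded in Table~\ref{t:gens3b}. The principal difficulty is the orbit bookkeeping for the sporadic small-degree exceptional groups and for the $\pxl(2,q)$ family, which accounts for the irregular entries in the table.
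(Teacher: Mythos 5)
The critical gap is in your part~(1). Your argument extracts a point permutation from the action of an automorphism on ``the rank-$1$ elements'', but you never establish that $\langle G,t\rangle$ contains any rank-$1$ elements (constant maps). That is precisely the synchronization question, and it is the whole non-trivial content of this step: the paper itself notes (citing Neumann) that there are primitive groups $G$ and singular maps $t$ for which $\langle G,t\rangle$ contains no constants, so their existence cannot be assumed. The paper's proof uses the hypothesis that $G$ has one orbit on $(n-k)$-sets as follows: for $k\ge2$ this makes $G$ $(n-k)$-homogeneous, hence $k$-homogeneous, hence $2$-homogeneous, and $2$-homogeneous groups together with any singular map generate all constants; for $k=1$ the map has rank $n-1\ge n-4$ and primitive groups synchronize every singular map of rank at least $n-4$. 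Once all constants lie in the semigroup, Sullivan's theorem immediately gives that every automorphism is induced by conjugation by an element of $N_{S_n}(\langle G,t\rangle)$. Your alternative route via ``rigidity'' of the $G$-action and the idempotent $\mathcal{H}$-class structure is not developed enough to replace this; as written, part~(1) is unproved.

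Part~(2) follows essentially the paper's route (reduce, via part~(1), to whether $N_{S_n}(G)$ preserves the $G$-orbits on the kernel partitions, then quote Theorem~\ref{main3}), and is fine modulo part~(1). In part~(3) your criterion is misstated: the correct necessary and sufficient condition is that the kernels of the maps in $A$ meet every $G$-orbit on $(n-k)$-partitions, so the minimum of $|A|$ is exactly the number of such orbits --- not ``the number of $G$-orbits on rank-$(n-k)$ maps having a fixed image'', and no point-stabilizer orbits or transversal data enter the count. Sufficiency is proved by adjusting the image with an element of $G$ (one orbit on $(n-k)$-sets), raising to a power to obtain an idempotent with prescribed kernel and image, and using the fact that the rank-$(n-k)$ idempotents generate all maps of rank at most $n-k$. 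The numbers you would then read off from the orbit tables are the right ones, but the reduction you state does not produce them.
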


{
\begin{table}[htbp]
\[\begin{array}{|c|c|c|c|}
\hline
\pbox{5cm}{\mbox{\tiny{Rank}} \\ \text{\tiny{$n-k$}} }& { |A|} &\pbox{5cm}{\   \\ {\tiny Sample $k$-homogeneous groups}\\  {\tiny  attaining the bound for $|A|$} \\ }&\pbox{5cm}{{\tiny Minimum number of} \\ {\tiny generators for a primitive} \\ {\tiny $k$-homogeneous group}  }\\
\hline\hline
\text{\tiny{$n-1$}} & \frac{(n-1)}{2} &C_p,D_{p} \ (\mbox{{\tiny $n$ odd prime}})  & \frac{C\log n}{\sqrt{\log \log n}} \\
\text{\tiny{$n-2$}} & O(n^2) &\mbox{Example \ref{expl2.1}} &2\\
\text{\tiny{$n-3$}} & O(n^3) &\mathrm{PSL}(2,q),\mathrm{P}\Gamma\mathrm{L}(2,q) &2\\
\text{\tiny{$n-4$}} & 12160 &\mathrm{P}\Gamma\mathrm{L}(2,32)  \ (\mbox{{\tiny $n=33$}}) &2\\
\text{\tiny{$n-5$}} & 77&M_{24} \ (\mbox{{\tiny $n=24$}})&2\\
\text{\tiny{$n-k$ $(k\ge5)$}} & p(k)&S_n,A_n &2 \\
\hline
\end{array}\]
\caption{\label{t:gens3b} { Number of rank $n-k$ maps needed to together with a $k$-homogeneous group $G$ generate all the maps of rank not larger than  $n-k$.}}
\end{table}

As said, the two results above are just  sample theorems. For more detailed results we refer the reader to the sections below.}

In what semigroup theory concerns, this paper belongs to the general area of investigating how the  recent results on group theory, chiefly the classification of finite simple groups, can help the study of semigroups. (For other papers on this line of research, see for example \cite{andre,ABC,ABCRS,ArBeMiSc,circulant2,ArCa13,ArCa14,ArCa12,acmn,ArCaSt15,ArDoKo,ArMiSc,ben,Ata15,Le99,lmm,lm,mcalister,neu,symo} and the references therein.)
The typical object in this field is a semigroup generated by a set of non-invertible transformations $A\subseteq T_n\setminus S_n$ and a group of permutations $G$ contained in $S_n$. In this paper we are mainly concerned with the description of  automorphisms and minimal generating sets, for semigroups having special given group of units. 

%

If $S$ is a semigroup and $U$ is a subset of $S$, then we say that {\em $U$
generates $S$} if every element of $S$ is expressible as a product of the
elements of $U$. The {\em rank} of a semigroup $S$, denoted by $\rank S$, is the
least number of elements in $S$ needed to generate $S$.  It is well-known that a
finite full transformation semigroup, on at least 3 points, has rank~3, while a
finite full partial transformation semigroup, on at least 3 points, has rank~4
(see \cite[Exercises~1.9.7 and 1.9.13]{Ho95}). The problem of determining the
minimum number of generators of a semigroup is classical, and has been studied
extensively; see, for example, \cite{ar2002,ArBeMiSc,ArMiSi,1,8,10,Kon,15,16} and the references therein. Given the importance of idempotent generated semigroups illustrated by the Erdos/Howie famous twin results (see \cite{erdos,howie} and also \cite{ar2000,erdos2}) the related notion of 
{\em idempotent rank} appeared as natural and has also been widely investigated; the same can be said about the concepts of   {\em relative rank} and {\em nilpotent rank}; see \cite{0,2,mitchell1,6,9,8.5,mitchell2,14}.
One of the goals of this paper is to contribute to this line of research.

Another classic topic in semigroup theory is the description of the automorphisms of semigroups. After the pioneer work of Schreier
\cite{Sc36} and Mal'cev \cite{Ma52}, proving that the group of automorphisms
of $T_n$ is isomorphic to $S_n$, a long sequence of new results followed (for example,  \cite{circulant2,abmn,ArDoKo,ArFeJe,ArKi,ArKo,ArKo1,arko,arko2,arko20,Le85,Le87,Le96,Li53,Ma67,Sullivan,Su61,symo,yang} and the references therein). In addition to the general interest  of studying automorphisms of mathematical structures, the description of automorphisms of semigroups turned out to be a key ingredient in Plotkin's \emph{universal algebraic geometry} \cite{plotkin} and \cite{belov2,belov,berzins,formanek,Lipyanski,MaPl07,MaPl04,Mas03,plzh06,plzh07}.
Here, we use the impressive progresses made in the theory of permutation groups during the last couple of decades, to contribute to this line of research by finding the automorphisms of semigroups with given group of units. 

{In Section \ref{groups} we prove the main theorems about the minimum number of generators of primitive groups, and we also give estimates on the number of orbits of primitive groups on $(n-k)$-partitions, for $k\ge n/2$. In Section \ref{orbs and norms} we tackle the problem of independent interest of classifying the permutation groups in which all
orbits on $(n-k)$-partitions are invariant under the normalizer. 
In Section \ref{sem1} we apply the results proved in the previous sections to describe automorphisms and ranks of semigroups in which its group of units has just one orbit on the kernel type of $t$. In Section \ref{sem2} we consider similar problems, but for semigroups whose group of units  has just one orbit on the image of $t$.  
{Section~\ref{norm} contains some comments on the normalizers of
$2$-homogeneous or primitive groups.}
The paper ends with a section of open problems.}


\section{Group theory}\label{groups}

The aim of this section is to prove all the results of this form.

\begin{thm}	Let $k\le n/2$ and let 
 $G\le S_n$ be a primitive $k$-homogenous group. Then,
 \begin{itemize}
 \item $G$ has $m$ orbits on the set of $(n-k)$-partitions; 
 \item the smallest number of elements needed to generate $G$ is $m'$. 
 \end{itemize}  
\end{thm}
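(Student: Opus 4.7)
The plan is to reduce the statement to a case-by-case verification using the classification of finite simple groups, since the quantities $m$ and $m'$ in the theorem are read from tables rather than given by a uniform formula. First I would invoke the Livingstone--Wagner theorem together with Kantor's classification to enumerate the primitive $k$-homogeneous groups for each $k\le n/2$: for $k\ge5$ these coincide with the $k$-transitive groups, for $k=4$ the extra examples sit inside $\PGamL(2,q)$ with $q\equiv 3\pmod 4$, for $k=3$ one adds further subgroups of $\PGamL(2,q)$ and the affine groups $\AGL(d,2)$ and $\AGamL(1,q)$ with $q$ an odd prime power, and for $k=2$ one has the full Hering-style list of affine and almost simple $2$-transitive groups together with a short list of $2$-homogeneous non-$2$-transitive ones. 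This reduction turns the problem into a finite number of families.

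For the orbit count I would use the bijection between types of $(n-k)$-partitions and partitions of $k$: given an $(n-k)$-partition with part sizes $(a_1,\dots,a_{n-k})$, the multiset of values $a_i-1$ (with zeros discarded) is a partition of $k$. Thus there are $p(k)$ types, and on each type $\lambda$ the $G$-orbits correspond bijectively to the $(G,H_\lambda)$-double cosets in $S_n$, where $H_\lambda$ is the $S_n$-stabiliser of a fixed partition of type $\lambda$. For $k$-transitive $G$ each type yields a single orbit (and hence $m=p(k)$), while for the remaining $k$-homogeneous groups I would compute the double coset decomposition directly by analysing the action of $G$ on ordered tuples and quotienting by the stabiliser of the type; for the sporadic groups in the list this is a bounded finite computation.

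For the minimum number of generators, the central point is that every $2$-homogeneous group is $2$-generated, from which the $m'$ values for larger $k$ follow a fortiori since the groups become more restricted. For almost simple $G$, I would appeal to the known fact that every finite simple group is $2$-generated, and then handle the short list of almost simple $2$-transitive extensions using their explicit description. For affine $G=V\rtimes G_0$ with $V=\GF(p)^d$ and $G_0\le\GL(d,p)$ transitive on $V\setminus\{0\}$, I would choose $0\ne v\in V$ and a generator $g$ of a suitable cyclic overgroup in $G_0$, and verify that $\langle v,g\rangle=G$ by showing that the $\langle g\rangle$-orbit of $v$ spans $V$ and that $g$ together with an appropriate translate suffices to recover all of $G_0$; since $G_0$ already lies on the Hering list, there are only finitely many families to check. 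The values in the tables then record the resulting bounds (and genuine minima) for each family.

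The main obstacle I expect is the affine case at $k=2$, where the Hering list is the longest and where $2$-generation must be established family-by-family rather than by one uniform trick, especially for the solvable exceptional examples such as the small-dimensional $\mathrm{SL}$, $\mathrm{Sp}$, and exceptional subgroups of $\GL(d,p)$, and for the sporadic $2$-homogeneous groups ($M_{11}$, $M_{12}$, $2^4{:}A_7$, $\AGamL(1,q)$ for small $q$, etc.), where a short computer algebra verification in \textsf{GAP} is the cleanest route. Assembling these verifications into the tables completes the proof.
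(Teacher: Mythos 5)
Your overall architecture matches the paper's: reduce via CFSG and the Livingstone--Wagner/Kantor classification to a list of families, use the bijection between types of $(n-k)$-partitions and partitions of $k$ to organise the orbit count, and make $2$-generation of $2$-homogeneous groups the engine for the $m'$ column. The genuine gap is in how you establish $2$-generation. The paper's proof rests on the Lucchini--Menegazzo theorem: if $G$ is a non-cyclic finite group with a unique minimal normal subgroup $M$, then $d(G)=\max\{2,d(G/M)\}$. Your substitutes do not close the argument. In the almost simple case, $2$-generation of the simple socle $T$ says nothing by itself about $d(G)$ for $T<G\le\mathrm{Aut}(T)$; what is needed (and what the paper proves) is that $d(G/T)\le2$, using that a $2$-transitive group with socle $\mathrm{PSL}(d,q)$ contains no graph automorphisms, after which the quoted theorem lifts this to $d(G)=2$. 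In the affine case your construction $\langle v,g\rangle$, with $v\in V$ and $g$ generating a cyclic subgroup of $G_0$, yields at most $V\rtimes\langle g\rangle$, which is proper whenever $G_0$ is non-cyclic --- that is, in all the substantial cases $G_0\geq\mathrm{SL}(d,q)$, $\mathrm{Sp}(d,q)$, $G_2(q)$. The ``add a translate'' repair, taking generators $tg_1,g_2$ whose images generate $G_0$, still has to rule out that $\langle tg_1,g_2\rangle$ is a complement to $V$ rather than all of $G$; this is precisely the difficulty the unique-minimal-normal-subgroup theorem disposes of (applied once to $G$ with $M=V$, and again to the linear group $G_0$ when it has normal subgroup $\mathrm{SL}$, $\mathrm{Sp}$ or $G_2$, with the metacyclic one-dimensional case and finitely many exceptions handled separately).

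Two smaller points. First, your double-coset computation of orbit numbers is a ``bounded finite computation'' only for the sporadic groups; for the infinite families at $k=2,3$ (e.g.\ $\mathrm{PSL}(2,q)\le G\le\mathrm{P}\Gamma\mathrm{L}(2,q)$) the number of orbits is unbounded, growing like $n$, $n^2$ or $n^3$ according to the partition type, and the content of the theorem there is the asymptotic bound (obtained in the paper by elementary counting: each $2$-set lies in $n-2$ triples, each $2$-set is disjoint from ${n-2\choose2}$ others, and so on) together with examples attaining it; your plan does not produce these bounds. Second, the template also covers $k=1$, where $2$-generation fails and the correct statement is the bound $C\log n/\sqrt{\log\log n}$ for primitive groups due to Lucchini, Menegazzo and Morigi, with the orbit count being the number $n_2(G)$ of orbits on $2$-subsets; this case is absent from your proposal.
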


The case in which least can be said is the case of $k=1$. The \emph{rank} $r(G)$ of a
transitive permutation group $G$ (acting on $\{1,\ldots,n\}$) is the number of
$G$-orbits on ordered pairs from $\{1,\ldots,n\}$. To handle the case of $k=1$, we
need a slightly different parameter, the number $n_2(G)$ of $G$-orbits on the
set of $2$-subsets of $\{1,\ldots,n\}$. Clearly $(r(G)-1)/2\le n_2(G)\le r(G)-1$;
the lower bound holds when $G$ has odd order (since then no pair of points can
be interchanged by an element of $G$), and the upper bound when all the orbitals
of $G$ are self-paired. Note that $r(G)\le n$, with equality if and only if
$G$ is regular. In particular, a primitive group $G$ has $r(G)=n$ if and only
if $n$ is prime and $G$ is cyclic of order $n$. We thus see that $n_2(G)\le n-1$
for transitive groups $G$; equality is realised for an elementary abelian $2$-group acting regularly, but for primitive groups of degree greater than $2$ we have $n_2(G)\le(n-1)/2$, with equality only for the cyclic and dihedral groups of odd prime degree.

\begin{thm}	Let 
 $G\le S_n$ be a $1$-homogeneous (that is, transitive) permutation group. Then
 \begin{itemize}
 \item $G$ has $n_2(G)$ orbits on the set of $(n-1)$-partitions; 
 \item the smallest number of elements needed to generate $G$ is at most
 \[
 \begin{array}{ll}
\frac{C n}{\sqrt{ \log n}}&\mbox{if $G$ is transitive} 	\\ \\
\frac{C\log n}{\sqrt{\log \log n}}&\mbox{if $G$ is primitive,} 	
 \end{array}
 \]where $C$ is a universal constant.
 \end{itemize}  
\end{thm}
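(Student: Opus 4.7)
The plan is to prove each bullet separately.

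For the orbit-count bullet, I would exhibit an explicit $G$-equivariant bijection between the set of $(n-1)$-partitions of $\{1,\ldots,n\}$ and the set of $2$-subsets. An $(n-1)$-partition has $n-1$ parts summing to $n$, so it consists of exactly one part of size $2$ together with $n-2$ singletons; sending the partition to its unique $2$-block is manifestly bijective and commutes with the action of $G$. Hence the number of $G$-orbits on $(n-1)$-partitions equals $n_2(G)$ by the very definition of the latter. No further work is required here.

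For the generation-count bullet, both estimates are by now well-established asymptotic theorems on the minimum number of generators $d(G)$ of a permutation group of degree $n$. The transitive bound $d(G)\le Cn/\sqrt{\log n}$ is Pyber's theorem, while the sharper primitive bound $d(G)\le C\log n/\sqrt{\log\log n}$ is due to Lucchini, Menegazzo and Morigi; both depend on the Classification of Finite Simple Groups. I would invoke these as black boxes rather than reprove them. For context, the primitive estimate is obtained by reducing via the O'Nan--Scott theorem to the affine, almost simple, diagonal and product types: in the almost simple type one exploits $2$-generation of nonabelian finite simple groups combined with sharp bounds on the order of the outer automorphism group; in the affine type the $\sqrt{\log\log n}$ saving comes from the restricted dimension of an irreducible linear group that can be primitive of a prescribed degree; the remaining types are handled by explicit product constructions. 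The transitive bound then follows by iterating the primitive estimate along a chain of block systems, summing the contributions at each refinement level of the corresponding wreath embedding.

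The main obstacle in a self-contained proof would be the primitive bound itself, whose derivation is genuinely substantial and draws on CFSG-level input about simple groups and their representations. For the purposes of the present theorem, however, these two asymptotic results are used essentially as external inputs, so the only new content is the equivariant bijection in the first paragraph, from which the orbit-count equality is immediate and the two generation bounds are then simply quoted.
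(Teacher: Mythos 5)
Your proposal is correct and follows essentially the same route as the paper: the first bullet via the equivariant identification of an $(n-1)$-partition with its unique $2$-block, and the second by quoting the asymptotic generation bounds as external inputs (the paper cites Lucchini--Menegazzo--Morigi for both the transitive and the primitive estimate, so your attribution of the transitive bound to Pyber should be adjusted, but this does not affect the argument).
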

\begin{proof}
The $(n-1)$-partitions all have one part with two elements and all the other parts are singletons. Therefore the group has as many orbits on the $(n-1)$-partitions as orbits on   the set of $2$-sets.

McIver and Neumann \cite{McIN} showed that every subgroup of $S_n$ can be generated by
$\lfloor n/2\rfloor$ elements if $n\neq 3$, and by $2$ if $n=3$. This bound is
best possible for arbitrary subgroups, but for transitive or primitive subgroups it has been improved in \cite{Lu1,Lu2} to the statements in the theorem.\end{proof}

Now we are going to prove that 
the minimum number of generators of any $2$-homogeneous finite group
is~$2$. (It is worth observing that we could not find this observation  in the literature; we are grateful to Colva Roney-Dougal and Andrea Lucchini for independently confirming it.) The proof uses the following result proved by  Lucchini and
Menegazzo~\cite{lume}. Here $d(G)$ denotes the least  number of elements of $G$ needed to generate the whole $G$. 

\begin{thm}[\cite{lume}]
Let $G$ be a non-cyclic finite group having a unique mimimal normal
subgroup $M$. Then $d(G)=\max\{2,d(G/M)\}$.
\label{t:2gen}
\end{thm}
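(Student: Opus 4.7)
The plan is to prove the two inequalities in $d(G)=\max\{2,d(G/M)\}$ separately. The lower bound is immediate: $G$ being non-cyclic forces $d(G)\ge 2$, and any generating set of $G$ projects to a generating set of $G/M$, so $d(G)\ge d(G/M)$.

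For the upper bound, set $d=\max\{2,d(G/M)\}$ and choose lifts $g_1,\dots,g_d\in G$ of a $d$-element generating set of $G/M$. Writing $H=\langle g_1,\dots,g_d\rangle$, we have $HM=G$. If $H=G$ we are done; otherwise $H\cap M$ is a proper subgroup of $M$, and $HM=G$ promotes its $H$-invariance to $G$-invariance, so by the minimality of $M$ we get $H\cap M=1$, making $H$ a complement to $M$ in $G$. The task then becomes to modify each $g_i$ by some $m_i\in M$ so that $\langle g_1m_1,\dots,g_dm_d\rangle=G$. One splits into cases according to the structure of $M$: since $M$ is the unique minimal normal subgroup, $M$ is characteristically simple, so $M$ is either elementary abelian of order $p^m$ or a direct power $T^k$ of a non-abelian simple group $T$. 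In the abelian case, if $M\subseteq Z(G)$ then uniqueness forces $M\subseteq\Phi(G)$ (any complement to $M$ would split off a second minimal normal subgroup), and then $d(G)=d(G/\Phi(G))\le d(G/M)\le d$; otherwise one exploits the non-trivial action of $G/M$ on $M$ together with Gasch\"utz's lemma on lifting generators across an abelian normal subgroup to produce the required modification. In the non-abelian case $M=T^k$, one counts complements to $M$ in $G$ via crossed homomorphisms and shows that the number of $d$-tuples $(m_1,\dots,m_d)\in M^d$ for which $\langle g_1m_1,\dots,g_dm_d\rangle$ remains a proper complement is strictly less than $|M|^d$.

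The main obstacle is this last counting step for $M=T^k$. It requires quantitative control over both the complements of $M$ in $G$ and the number of generating $d$-tuples of each such complement, and it leans on the CFSG-assisted fact that $T$ is $2$-generated with enough flexibility in its generating pairs. The hypothesis $d\ge 2$ is used essentially at this point: a single lifted element could only generate a cyclic subgroup, but with two or more free parameters one can move the candidate complement across every $G$-conjugacy class of complements, eventually producing a lift whose span exceeds every complement and hence equals $G$.
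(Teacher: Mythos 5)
First, a point of comparison: the paper does not prove this statement at all --- it is quoted from Lucchini and Menegazzo \cite{lume} and used as a black box in the proof of Corollary~\ref{2hom}. So there is no internal proof to measure yours against; what follows is an assessment of your sketch on its own terms.

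Your lower bound and overall architecture (lift a generating tuple of $G/M$ and perturb the lifts by elements of $M$) match the standard strategy, and the abelian case is essentially right, with the caveat that Gasch\"utz's lifting lemma in its usual form presupposes $d(G)\le d$, which is exactly what is being proved; there you really need the cohomological count of complements ($|Z^1|$ against $|M|^d$) rather than the lemma itself. The genuine gap is in the non-abelian case. The step ``$HM=G$ promotes the $H$-invariance of $H\cap M$ to $G$-invariance, so $H\cap M=1$'' is false when $M$ is non-abelian: $M$ normalizes $M$ but not $H$, so $H\cap M$ need not be normal in $G$. Concretely, take $G=S_5$ with unique minimal normal subgroup $M=A_5$; here $d(G/M)=1$, so $d=2$, and the lifts $g_1=(1\,2)$, $g_2=(1\,2\,3)$ of a $2$-element generating tuple of $G/M$ give $H=\langle(1\,2),(1\,2\,3)\rangle\cong S_3$ with $HM=G$ but $H\cap M=\langle(1\,2\,3)\rangle$ neither trivial nor normal. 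Consequently your counting step cannot be restricted to tuples generating proper \emph{complements}: a bad tuple $(g_1m_1,\dots,g_dm_d)$ may generate a proper subgroup meeting $M$ nontrivially. The correct count runs over all maximal subgroups $L$ of $G$ with $LM=G$ (equivalently $M\not\le L$); one bounds the number of such $L$ and, for each, the number of tuples falling inside $L$, using the index $|M:L\cap M|$ together with CFSG-dependent generation properties of the simple factors of $M$. With that repair your outline does reproduce the Lucchini--Menegazzo argument, but as written the non-abelian case does not go through.
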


\begin{cor}\label{2hom}
If $G$ is a finite $2$-homogeneous permutation group, then $d(G)=2$.
\end{cor}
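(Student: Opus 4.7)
My plan is to apply Theorem~\ref{t:2gen} to a $2$-homogeneous group $G$. The two things to check are that $G$ has a unique minimal normal subgroup $M$, and that the quotient $G/M$ satisfies $d(G/M)\le 2$; the cyclic case, which occurs only for $G=C_3$ acting on three points (where $d(G)=1$), should be treated separately as an exception to the literal statement.

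First, I would argue that every $2$-homogeneous group $G$ is primitive---transitivity on $2$-sets forbids any non-trivial block system---and has a unique minimal normal subgroup $M=\soc(G)$. Combining Burnside's theorem on $2$-transitive groups with Kantor's classification of $2$-homogeneous but not $2$-transitive groups (all contained in $\agaml(1,q)$ with $q$ a prime power, $q\equiv 3\pmod{4}$), one sees that $M$ is in every case either elementary abelian (affine type) or a non-abelian finite simple group (almost simple type), and in both cases $M$ is the unique minimal normal subgroup of $G$.

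Second, I would verify $d(G/M)\le 2$ by a short case analysis. In the affine case, $G/M$ embeds in $\Gamma\mathrm{L}(d,p)$ and acts transitively on the non-zero vectors of $M\cong\mathbb{F}_p^d$; Hering's theorem, via the CFSG, classifies such linear groups, and each entry in the list is routinely seen to be $2$-generated (in the non-$2$-transitive sub-case $G/M$ is even metacyclic inside $\Gamma\mathrm{L}(1,q)$). In the almost simple case, $G/M\le\Out(M)$; by the CFSG, $\Out(M)$ is solvable (Schreier's conjecture) and has the familiar structure of an extension of a diagonal group by a field-automorphism group by a graph-automorphism group of order at most $6$, so any subgroup is $2$-generated.

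With these two ingredients in place, Theorem~\ref{t:2gen} gives $d(G)=\max\{2,d(G/M)\}=2$ for every non-cyclic $2$-homogeneous $G$. The main obstacle is the affine case, where the $2$-generation of $G/M$ must be extracted from Hering's list of linear groups transitive on non-zero vectors; the almost simple case is essentially automatic once the standard description of $\Out(M)$ from the CFSG is invoked.
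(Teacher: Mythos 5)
Your overall strategy is the paper's own: reduce to Theorem~\ref{t:2gen} via the unique minimal normal subgroup, then verify $d(G/M)\le 2$ separately in the affine and almost simple cases. The affine case is treated in essentially the same spirit as the paper (which re-applies Theorem~\ref{t:2gen} to the linear groups containing $\SL(d,q)$, $\Sp(d,q)$ or $G_2(q)$, notes that the one-dimensional semilinear groups are metacyclic, and disposes of the finitely many remaining entries of Hering's list by explicit computation in \textsf{GAP}), and your caveat about the degenerate cyclic case such as $C_3$ in degree $3$ is fair.

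There is, however, a genuine gap in your almost simple case. You assert that since $G/M\le\Out(M)$ and $\Out(M)$ is solvable with the diagonal--field--graph structure, ``any subgroup is $2$-generated.'' That inference is false. For example, $\Out(\PSL(4,9))\cong C_4\rtimes(C_2\times C_2)$, in which both the field automorphism ($x\mapsto x^3=x^{-1}$ on the diagonal group $C_4$) and the graph automorphism act by inversion; its derived subgroup has order $2$ and its abelianization is $C_2\times C_2\times C_2$, so it requires three generators. Hence knowing only that $G/M$ sits inside $\Out(M)$ does not give $d(G/M)\le2$, and your argument would not close for socles $\PSL(d,q)$ with $d\ge3$. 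The missing ingredient is exactly the one sentence the paper devotes to this case: a group acting $2$-transitively on the points of $\PG(d-1,q)$ cannot contain a graph automorphism of $\PSL(d,q)$, since such an automorphism interchanges the point and hyperplane actions; therefore $G/M$ lies in the metacyclic group generated by the diagonal and field automorphisms, which is $2$-generated. For all other almost simple socles occurring in the $2$-transitive classification, $\Out(M)$ is itself metacyclic (or smaller), so every subgroup is $2$-generated and no further argument is needed. With that observation restored, your proof is correct and coincides with the paper's.
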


\begin{proof}
If $G$ is almost simple, then it satisfies the conditions of the theorem,
with $M$ the simple socle. Since, in the case of socle $\mathrm{PSL}(d,q)$,
the group $G$ contains no graph automorphisms, we have $d(G/M)\le2$ in
all cases, so $d(G)=2$.

If $G$ is affine, its unique minimal normal subgroup $M$ is elementary
abelian, and the quotient $H$ is a linear group; the relevant groups can be
found in \cite{cam,dixon}. If the linear group has
normal subgroup $\mathrm{SL}(d,q)$, $\mathrm{Sp}(d,q)$ ($d>1)$ or $G_2(q)$,
then another application of Theorem~\ref{t:2gen} shows that $d(H)=2$, whence
$d(G)=2$. For $1$-dimensional semi-affine groups, the linear group is metacyclic, and the result is clear. The finitely
many cases remaining can be dealt with case by case: in each case, explicit generators for the linear group are known, and where more than two are given it suffices to show that the corresponding linear group can be generated by two elements. The groups (apart from the sharply $2$-transitive group of degree $59^2$
with linear group $\mathrm{SL}(2,5)\times C_{29}$, which is clearly $2$-generated), are within reach of \textsf{GAP}; the computation can be speeded up by taking the first potential generator to belong to a set of conjugacy class representatives.
\end{proof}

Regarding  the number of orbits on $(n-k)$-partitions, we start by the large values of $k$.

\begin{thm}
Suppose that $G$ is a permutation group of degree $n$ which is 
$k$-homogeneous, where either $6\le k\le n/2$, or $k=5$, $n\ge25$, or $k=4$,
$n\ge34$. Then: 
\begin{enumerate}
\item $G$ has $p(k)$ orbits on the set of $(n-k)$-partitions, where
$p$ is the partition function; 
\item there is one orbit on partitions of each
possible type.
\end{enumerate}
\end{thm}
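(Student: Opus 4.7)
The plan is to reduce the theorem to the case $G\supseteq A_n$ and then verify that $A_n$ itself is transitive on the partitions of each type. First I would count the types: an $(n-k)$-partition has parts $a_1\ge\cdots\ge a_{n-k}\ge 1$ summing to $n$, and subtracting $1$ from each part yields a sequence $b_i=a_i-1\ge 0$ with $\sum b_i=k$. The types are therefore in bijection with partitions of $k$ having at most $n-k$ parts; since the hypothesis $k\le n/2$ gives $n-k\ge k$, every partition of $k$ occurs, so there are exactly $p(k)$ types.

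Next I would invoke the classifications (ultimately relying on \CFSG) of multiply transitive and multiply homogeneous groups. Kantor's theorem says that a $k$-homogeneous group of degree $n$ with $k\le n/2$ is $k$-transitive except in a short list of cases which occur only for $k\le 4$, with $\PGamL(2,32)$ of degree $33$ being the largest such example. The classification of multiply transitive groups then gives: for $k\ge 6$ only $S_n$ and $A_n$; for $k=5$ only $S_n,A_n,M_{12},M_{24}$, with degrees at most $24$; and for $k=4$ the additional possibilities $M_{11},M_{12},M_{23},M_{24}$ also have degree at most $24$. Under each of the three hypotheses in the theorem, the degree thresholds eliminate every exceptional group, leaving only $G\supseteq A_n$.

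With that reduction in hand, I would show $A_n$ acts transitively on partitions of each type $\lambda$. Since $k\ge 4$, any partition $P$ of type $\lambda$ has at least one part of size $\ge 2$, and a transposition supported in that part is an odd permutation fixing $P$. Hence $\Stab_{S_n}(P)\not\subseteq A_n$, so $|\Stab_{A_n}(P)|=\tfrac12|\Stab_{S_n}(P)|$, and the $A_n$-orbit of $P$ has the same cardinality as the $S_n$-orbit. The $S_n$-orbits are by definition the types, so $A_n$ (and \emph{a fortiori} $G$) has exactly one orbit on partitions of each of the $p(k)$ types.

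The only real obstacle is the bookkeeping in the second paragraph: one must confirm that the thresholds $n\ge 25$ for $k=5$ and $n\ge 34$ for $k=4$ are chosen just large enough to exclude every non-alternating $k$-homogeneous group from Kantor's list and from the list of multiply transitive groups. This is a finite case analysis rather than a conceptual difficulty.
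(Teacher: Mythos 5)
Your proposal is correct and follows essentially the same route as the paper: reduce via the classification of multiply homogeneous/transitive groups to $G\supseteq A_n$, count the types by the subtract-one bijection with partitions of $k$, and observe that $A_n$ is transitive on each type. You merely make explicit two steps the paper leaves as ``well known'' (the odd-permutation-in-a-part argument for $A_n$-transitivity, and the degree bookkeeping that the thresholds $n\ge25$ and $n\ge34$ exclude the Mathieu groups and $\mathrm{P}\Gamma\mathrm{L}(2,32)$).
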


\begin{proof}
It follows from the Classification of Finite Simple Groups and known results
about $4$- and $5$-homogeneous groups that any $G$ under the assumptions of the theorem is $S_n$ or $A_n$. The second
assertion is a well-known fact about $A_n$ and $S_n$. For the first, given a partition of $\{1,\ldots,n\}$
with $n-k$ parts, for $k\le n/2$, subtracting one from the size of each part
gives a partition of $k$, and every partition of $k$ arises in this way; all set partitions of $\{1,\ldots,n\}$ corresponding
to each fixed partition lie in the same orbit of the symmetric or alternating
group.
\end{proof}

The numbers of orbits of the finitely many $k$-homogeneous groups other than
symmetric or alternating groups for $k=5$ and $k=4$ can be computed.

\begin{thm}
Let $G$ be a $k$-homogeneous group of degree~$n$ (where $k=4$ or $5$ and
$n\ge2k$), other than $S_n$ or $A_n$. Then the number of orbits of $G$ on
$(n-k)$-partitions are given in Tables \ref{t:4hom} and \ref{t:5hom} below.
\end{thm}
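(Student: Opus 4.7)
The plan is to turn the theorem into a finite computation. The first step is to write down the explicit list of groups that need to be considered. By the classification of $k$-homogeneous groups for $k\ge 4$ (a consequence of \CFSG), and because the preceding theorem already disposes of $S_n$, $A_n$ and of the ranges $n\ge 34$ ($k=4$) and $n\ge 25$ ($k=5$), only finitely many $G$ remain. For $k=5$ these are the Mathieu groups $M_{12}$ and $M_{24}$; for $k=4$ one also picks up $M_{11}$ (degrees $11$ and $12$), $M_{23}$, together with whatever additional $4$-homogeneous-but-not-$4$-transitive examples occur in the range $8\le n\le 33$, such as $\PSL(2,32)\le\PGamL(2,32)$ and a handful of small semi-affine groups like $\AGamL(1,8)$. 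These lists are standard and already underlie the previous theorem.

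Second, for each such $G$ and each partition type $\lambda$ of $n$ with exactly $n-k$ parts, I would translate the problem into a simpler orbit count. Write
\[
\lambda=(a_1+1,\ldots,a_r+1,1,\ldots,1),\qquad a_i\ge 1,\quad \sum a_i=k,
\]
so that only the non-singleton parts carry information. A set partition of type $\lambda$ is then an unordered collection of $r$ pairwise disjoint subsets of sizes $a_1+1,\ldots,a_r+1$; equivalently, the number of $G$-orbits on such partitions equals the number of $G$-orbits on \emph{ordered} tuples of those disjoint subsets, divided by the order of the obvious group permuting equal-sized parts. Since $n\ge 2k$, every partition of $k$ yields a valid type, so $p(k)$ entries need to be filled for each $G$.

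Third, the orbit counts themselves form a routine \textsf{GAP} computation at these modest degrees: form the action of $G$ on ordered disjoint tuples of the required sizes and list the orbits. As a cross-check I would verify each entry via the Cauchy--Frobenius formula
\[
|\Omega/G|=\frac{1}{|G|}\sum_{g\in G}|\mathrm{Fix}(g)|,
\]
taken as a sum over conjugacy class representatives, and compare the entry for type $(k+1,1,\ldots,1)$ against the tabulated $G$-orbit counts on $(k+1)$-subsets, which are well documented for the Mathieu groups and for the small affine and semi-affine groups in question.

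The only real obstacle is bookkeeping: arranging the numbers for each group and each of the $p(4)=5$ or $p(5)=7$ types consistently in Tables \ref{t:4hom} and \ref{t:5hom}. No structural argument beyond \CFSG and routine orbit enumeration is needed.
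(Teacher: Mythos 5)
Your overall strategy---reduce to the finite list of $4$- and $5$-homogeneous groups supplied by the classification, then compute each table entry by machine---is exactly the paper's (the paper offers nothing beyond ``the numbers can be computed'', plus an independent confirmation of the $\PGamL(2,32)$ column). But one step of your reduction is mathematically wrong and would corrupt the tables. The number of $G$-orbits on set partitions of type $\lambda$ is \emph{not} the number of $G$-orbits on ordered tuples of disjoint parts divided by the order of the group $H$ permuting equal-sized positions. Writing $\Omega$ for the ordered tuples, the unordered count is the number of $(G\times H)$-orbits on $\Omega$, which by Burnside applied to the $H$-action on $\Omega/G$ equals $|\Omega/G|/|H|$ only when no nontrivial $h\in H$ fixes a $G$-orbit, i.e.\ only when no element of $G$ permutes the parts of any partition of type $\lambda$ nontrivially. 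That hypothesis fails for the groups at hand: for instance $M_{24}$ contains elements of order $4$, and if $(a\,b\,c\,d)$ is a $4$-cycle of such an element $g$ then $g^2$ interchanges the blocks $\{a,b\}$ and $\{c,d\}$, so for type $(2,2,2,2,1,\ldots,1)$ the division strictly undercounts. This is precisely why Example~\ref{expl2.1} in the paper takes care to note that $|G|$ is coprime to $6$ before dividing. Your Cauchy--Frobenius cross-check, applied to the action on the set partitions themselves, is sound and would expose the discrepancy, but the primary method has to be either a direct orbit enumeration on set partitions or a Burnside count over $H$ acting on $\Omega/G$.

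Your group list also needs correcting. For $k=4$ the non-alternating, non-symmetric $4$-homogeneous groups are the $4$-transitive Mathieu groups $M_{11}$ (degree $11$), $M_{12}$, $M_{23}$, $M_{24}$ together with Kantor's three $4$-homogeneous-but-not-$4$-transitive groups $\PSL(2,8)$ and $\PGamL(2,8)$ in degree $9$ and $\PGamL(2,32)$ in degree $33$---exactly the seven columns of Table~\ref{t:4hom}. The examples you name do not belong: $M_{11}$ in degree $12$ is $3$- but not $4$-homogeneous, $\AGamL(1,8)$ has order $168$ which is not divisible by $\binom{8}{4}=70$, and $\PSL(2,32)$ has order $32736<\binom{33}{4}=40920$, so neither can be transitive on $4$-sets. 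Since the theorem is nothing more than an exhaustive computation over a specific finite list, getting that list exactly right is part of the proof, not bookkeeping.
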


\paragraph{Remark} We are grateful to Robin Chapman for independent 
confirmation of the values for $\mathrm{P}\Gamma\mathrm{L}(2,32)$ in
Table~\ref{t:4hom}.
{\tiny
\begin{table}[htbp]
\[
\begin{array}{|l||r|r|r|r|r|r|r|}
\hline
\hbox{Degree} & 9 & 9 & 11 & 12 & 23 & 24 & 33 \\
\hline
\hbox{Group} & \mathrm{PSL}(2,8) & \mathrm{P}\Gamma\mathrm{L}(2,8) & M_{11} &
M_{12} & M_{23} & M_{24} & \mathrm{P}\Gamma\mathrm{L}(2,32) \\
\hline
(5,1,\ldots) & 1 & 1 & 2 & 1 & 2 & 1 & 3 \\
(4,2,1,\ldots) & 4 & 2 & 3 & 2 & 4 & 2 & 112 \\
(3,3,1,\ldots) & 4 & 2 & 2 & 2 & 3 & 2 & 82 \\
(3,2,2,1,\ldots) & 12 & 4 & 8 & 3 & 11 & 3 & 2772 \\
(2,2,2,2,1,\ldots) & 5 & 3 & 6 & 5 & 18 & 7 & 9191 \\
\hline
\hbox{Total} & 26 & 12 & 21 & 13 & 38 & 15 & 12160 \\
\hline
\end{array}
\]
\caption{\label{t:4hom}Orbits of $4$-homogeneous groups on $(n-4)$-partitions}
\end{table}
}

\begin{table}[htbp]
\[
\begin{array}{|l||r|r|}
\hline
\hbox{Degree} & 12 & 24 \\
\hline
\hbox{Group} & M_{12} & M_{24} \\
\hline
(6,1,\ldots) & 2 & 2 \\
(5,2,1,\ldots) & 2 & 3 \\
(4,3,1,\ldots) & 2 & 3 \\
(4,2,2,1,\ldots) & 5 & 8 \\
(3,3,2,1,\ldots) & 5 & 8 \\
(3,2,2,2,1,\ldots) & 8 & 22 \\
(2,2,2,2,2,1,\ldots) & 6 & 31 \\
\hline
\hbox{Total} & 30 & 77 \\
\hline
\end{array}
\]
\caption{\label{t:5hom}Orbits of $5$-homogeneous groups on $(n-5)$-partitions}
\end{table}

The situation is very different for the $2$- and $3$-homogeneous groups, to
which we now turn. The main difference is that there are infinitely many
such groups (apart from the symmetric and alternating groups), so there is
no reason why the number of orbits on $(n-k)$-partitions should be bounded
(and indeed it is not; it can grow as a polynomial in $n$, whose degree
depends on $k$ and on the partition considered).

\begin{thm}\label{2.7}
Let $G$ be a $2$-homogeneous permutation group on the set $\{1,\ldots,n\}$. Then
the number of $G$-orbits on the set of partitions of shape 
$(3,1,\ldots,1)$ is $O(n)$, and the number of orbits
on the set of partitions of shape $(2,2,1,\ldots,1)$ is $O(n^2)$.
\end{thm}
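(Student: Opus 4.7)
The plan is to reduce both orbit counts, using $2$-homogeneity, to orbit counts of a single pair-stabilizer acting on a smaller set, and then to apply the trivial upper bound ``number of orbits $\le$ size of the set''. Fix a $2$-subset $\{a,b\}\subseteq\{1,\ldots,n\}$ and let $H=G_{\{a,b\}}$ be its setwise stabilizer. Two obvious facts will do all the work: the number of $H$-orbits on $\{1,\ldots,n\}\setminus\{a,b\}$ is at most $n-2$, and the number of $H$-orbits on $2$-subsets of $\{1,\ldots,n\}\setminus\{a,b\}$ is at most $\binom{n-2}{2}=O(n^2)$.

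For partitions of shape $(3,1,\ldots,1)$, the partition is determined by its unique block of size $3$, so it is enough to count $G$-orbits on $3$-subsets. By $2$-homogeneity, every $3$-subset is a $G$-translate of one of the form $\{a,b,c\}$ with $c\notin\{a,b\}$, and $\{a,b,c\}$ and $\{a,b,c'\}$ lie in the same $G$-orbit whenever $c,c'$ lie in the same $H$-orbit on the complement. Hence the number of $G$-orbits is bounded above by the number of $H$-orbits on $\{1,\ldots,n\}\setminus\{a,b\}$, which is $O(n)$.

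The second bound is obtained by exactly the same scheme. A partition of shape $(2,2,1,\ldots,1)$ is an unordered pair $\{B_1,B_2\}$ of disjoint $2$-subsets; by $2$-homogeneity, some $g\in G$ moves $B_1$ onto $\{a,b\}$, so every such partition is $G$-equivalent to one of the form $\{\{a,b\},\{c,d\}\}$ with $\{c,d\}\subseteq\{1,\ldots,n\}\setminus\{a,b\}$, and two such partitions are $G$-equivalent as soon as their second blocks are $H$-equivalent. Thus the number of $G$-orbits is at most the number of $H$-orbits on $2$-subsets of the $(n-2)$-element complement, which is $O(n^2)$.

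There is no real obstacle in the argument; the only point worth flagging is that the inequalities produced are generally strict, because a $3$-subset contains three distinct $2$-subsets and a $(2,2,1,\ldots,1)$-partition contains two distinct $2$-blocks, so several $H$-orbits in the complement can merge into a single $G$-orbit on the larger structure. This slack is harmless for an $O(\cdot)$ estimate, but would need to be tracked (via a double-counting or Burnside argument on the appropriate incidence between $2$-subsets and the partition shapes) if one wanted sharp leading constants.
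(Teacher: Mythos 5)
Your proof is correct and follows essentially the same route as the paper: the paper bounds the number of orbits by the number of $3$-sets containing a fixed $2$-set (namely $n-2$) and by the number of $2$-sets disjoint from it (namely $\binom{n-2}{2}$), which is exactly your transversal argument with the intermediate pass through $H$-orbits replaced by the trivial count of representatives. The extra refinement via the pair-stabilizer $H$ is harmless but not needed for the stated $O(\cdot)$ bounds.
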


\begin{proof}
Since each $2$-set lies in $n-2$ sets of size $3$, $G$ has at most $n-2$ orbits on
$3$-sets. Also, for any $2$-set, there are at most ${n-2\choose2}$
$2$-sets disjoint from it, so there are at most this many orbits on 
$(2,2,1,\ldots,1)$ partitions.
\end{proof}

The bound on the number of orbits is best possible, as the next
example shows.

\begin{example}\label{expl2.1}	
Let $p$ be a prime congruent to $-1$ (mod~$12$). Let $G$
be the group of order $p(p-1)/2$ consisting of all maps of the field of
integers mod $p$ of the form $x\mapsto ax+b$, where $a$ is a non-zero square.
Its normalizer is the group of order $p(p-1)$, consisting of all maps of the
above form for arbitrary non-zero $p$.

The group $G$ is $2$-homogeneous so we take $k=2$. Now the $(p-k)$ partitions
have the form $(3,1,\ldots,1)$ or $(2,2,1,\ldots,1)$. Since $|G|$ is coprime
to $6$,
no element of $G$ except the identity fixes such a partition, and so the
number of orbits is
\[\frac{{p\choose3}+3{p\choose4}}{p(p-1)/2}=\frac{3p^2-11p+10}{12}.\]
Of these, $(p-2)/3$ are on partitions of type $(3,1,\ldots,1)$, and
$(p-2)(p-3)/4$ are on partitions of type $(2,2,1,\ldots,1)$.\qed
\end{example}

\medskip

There is a theorem for 3-homogeneous groups similar to Theorem \ref{2.7}.

\begin{thm}
Let $G$ be a $3$-homogeneous permutation group on the set $\{1,\ldots,n\}$. Then
the number of $G$-orbits on the set of $(n-3)$-partitions is $O(n)$ for
partitions of type $(4,1,\ldots,1)$, $O(n^2)$ for partitions of type
$(3,2,1,\ldots,1)$, and $O(n^3)$ for partitions of type $(2,2,2,1,\ldots,1)$.
\end{thm}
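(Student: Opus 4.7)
\begin{pf}
The strategy mirrors the proof of Theorem~\ref{2.7}: in each of the three cases we identify, inside an arbitrary partition of the given type, a distinguished (or at least bounded) collection of $3$-element subsets; using the $3$-homogeneity of $G$ we may then transport one such $3$-subset to a fixed one $T=\{a,b,c\}$, and it remains to count the partitions of the prescribed type that contain this fixed $T$ in the required way. That count will give an upper bound for the number of orbits.

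First, consider a partition $P$ of type $(4,1,\ldots,1)$. It has a unique block $B$ of size~$4$, and $B$ contains exactly $\binom{4}{3}=4$ subsets of size~$3$. By $3$-homogeneity, the orbit of $P$ under $G$ contains a partition whose $4$-block contains the fixed $3$-subset $T$. The number of such partitions equals the number of ways to choose the fourth point, namely $n-3$; hence the number of orbits is at most $n-3 = O(n)$. For the type $(3,2,1,\ldots,1)$, each partition has a distinguished $3$-block, which is itself a $3$-subset. Using $3$-homogeneity, every orbit contains a partition whose $3$-block is exactly $T$; the $2$-block can then be any pair from the remaining $n-3$ points, so the number of orbits is at most $\binom{n-3}{2} = O(n^2)$.

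Finally, consider a partition $P$ of type $(2,2,2,1,\ldots,1)$, with pairs $\{u_1,v_1\}$, $\{u_2,v_2\}$, $\{u_3,v_3\}$. A $3$-subset which meets each pair in exactly one point (a \emph{transversal}) can be chosen in $2^3=8$ ways. By $3$-homogeneity, every orbit contains a partition in which $T=\{a,b,c\}$ is such a transversal; that is, the three pairs have the form $\{a,x\}$, $\{b,y\}$, $\{c,z\}$ with $x,y,z$ distinct elements of $\{1,\ldots,n\}\setminus T$. Since the pairs are distinguished by which element of $T$ they contain, the number of such partitions is $(n-3)(n-4)(n-5) = O(n^3)$, which bounds the number of orbits.

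The main subtlety is in the third case: one must check that the chosen $3$-subset is genuinely a canonical ``witness'' inside the partition, so that the orbit count based on fixing $T$ is valid. This is ensured by the observation that, although every partition of type $(2,2,2,1,\ldots,1)$ carries several such transversals, we only need that \emph{at least one} of them can be mapped to $T$ by some element of $G$, which is immediate from $3$-homogeneity. No lower bound is claimed here, and indeed one can check with the example of Example~\ref{expl2.1} (suitably adapted) that the orders of magnitude $n$, $n^2$, $n^3$ are attained.
\end{pf}
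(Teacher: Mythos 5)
Your proof is correct and uses exactly the counting strategy the paper intends: the paper states this result as "similar to Theorem~\ref{2.7}" and omits the details, and your argument (fix a $3$-set $T$ by $3$-homogeneity, then count partitions of each type containing $T$ in the prescribed position, giving $n-3$, $\binom{n-3}{2}$, and $(n-3)(n-4)(n-5)$ respectively) is the natural completion of that analogy. Nothing is missing.
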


In fact we can say more. From CFSG, we know that, if $G$ is
$3$-homogeneous, then one of the following holds:

\begin{itemize}\itemsep0pt
\item $\mathrm{PSL}(2,q)\le G\le\mathrm{P}\Gamma\mathrm{L}(2,q)$, for some
prime power $q$;
\item $G=\mathrm{AGL}(d,2)$ for some $d$;
\item $G$ is one of finitely many exceptions.
\end{itemize}

In the first case, the order of $G$ is $O(n^3)$, so the number of orbits
on partitions of shape $2^31^{n-6}$ will be $\Omega(n^3)$. However, in the
other two cases, the number of orbits is bounded by a constant, independent
of $d$ in the second case. This is clear for the third case, so consider the
second. Suppose we have an $(n-3)$-partition of $\{1,\ldots,n\}$. Then the 
set of points lying in parts of size greater than $1$ has cardinality at 
most $6$, and so these points lie in an affine subspace of dimension at
most $5$. The group is transitive on affine subspaces of any given dimension,
and the stabiliser of such a subspace has only a bounded number of orbits on
its subsets of size at most $6$. 
The number of orbits for this type can be calculated by looking at 
$\mathrm{AGL}(5,2)$.  We find that the
number of orbits on $\mathrm{AGL}(d,2)$ on $(n-3)$-partitions is $12$ for
$d\ge5$. The numbers of orbits on partitions of the different types is given
in Table~\ref{t:3hom}, with the same conventions as earlier.

\begin{table}[htbp]
\[
\begin{array}{|c||c|c|c|}
\hline
\hbox{Degree} & 2^d\ (d\ge5) & 16 & 8 \\
\hline
\hbox{Group} & \mathrm{AGL}(d,2) & \mathrm{AGL}(4,2) & \mathrm{AGL}(3,2) \\
\hline
\relax(4,1,\ldots) & 2 & 2 & 2 \\
\relax(3,2,1,\ldots) & 3 & 3 & 2 \\
\relax(2,2,2,1,\ldots) & 7 & 6 & 3 \\
\hline
\hbox{Total} & 12 & 11 & 7 \\
\hline
\end{array}
\]
\caption{\label{t:3hom}Orbits of $\mathrm{AGL}(d,2)$ on $(n-3)$-partitions}
\end{table}

Similar data can be produced for any finite number of the other $3$-homogeneous
groups. Table \ref{t:3hom2} gives a selection of $3$-homogeneous groups of
degree $n\ge7$, which includes all the sporadic examples, all $4$-homogeneous groups, and all examples with $n\le10$.

\begin{table}[htpb]
\[
\begin{array}{|r|c|r|r|r|r|}
\hline
\hbox{Degree} & \hbox{Group} & (4,1,\ldots) & (3,2,1,\ldots) & (2,2,2,1,\ldots) & 
\hbox{Total} \\
\hline\hline
8 & \mathrm{AGL}(1,8) & 2 & 10 & 11 & 23 \\
  & \mathrm{A}\Gamma\mathrm{L}(1,8) & 2 & 4 & 5 & 11 \\
  & \mathrm{PSL}(2,7) & 3 & 4 & 7 & 14 \\
  & \mathrm{PGL}(2,7) & 2 & 3 & 5 & 10 \\
\hline
9 & \mathrm{PSL}(2,8) & 1 & 4 & 7 & 12 \\
  & \mathrm{P}\Gamma\mathrm{L}(2,8) & 1 & 2 & 3 & 6 \\
\hline
10 & \mathrm{PGL}(2,9) & 2 & 5 & 12 & 19 \\
   & M_{10} & 2 & 5 & 9 & 14 \\
   & \mathrm{P}\Gamma\mathrm{L}(2,9) & 2 & 4 & 8 & 14 \\
\hline
11 & M_{11} & 1 & 2 & 4 & 7 \\
\hline
12 & M_{11} & 2 & 4 & 6 & 12 \\
   & M_{12} & 1 & 1 & 3 & 5 \\
\hline
16 & 2^4:A_7 & 2 & 4 & 10 & 16 \\
\hline 
22 & M_{22} & 2 & 5 & 11 & 18 \\
   & M_{22}:2 & 2 & 4 & 10 & 16 \\
\hline
23 & M_{23} & 1 & 2 & 3 & 6 \\
\hline
24 & M_{24} & 1 & 1 & 2 & 4 \\
\hline
33 & \mathrm{P}\Gamma\mathrm{L}(2,32) & 1 & 16 & 127 & 144 \\
\hline
\end{array}
\]
\caption{\label{t:3hom2}Orbits of $3$-homogeneous groups on $(n-3)$-partitions}
\end{table}

\medskip

\section{Orbits of normalizers}\label{orbs and norms}

In this section we will be interested in the following questions: 
\begin{enumerate}
\item\label{q1} Given an orbit of the
$k$-homogeneous group $G$ on $(n-k)$-partitions,
what is the subgroup of the normalizer of $G$, in $S_n$, which
fixes that orbit? (The question is well-posed since $N_{S_n}(G)/G$ acts on
the set of orbits.) 
\item\label{q2} In particular, for which groups is it the case that every
orbit on $(n-k)$-partitions is invariant under the normalizer,
that is, the action of $N_{S_n}(G)/G$ on the set of orbits is trivial?
\end{enumerate}

If $G$ is the alternating group, then each of its orbits is stabilised by the
symmetric group. For $k\ge4$, any other $k$-homogeneous group is equal to its
normalizer, except for $\mathrm{PGL}(2,8)$ with $n=9$. 
This group is
$5$-homogeneous, and so has the same orbits on partitions of type $(5,1,1,1,1)$ as its
normalizer. Computation shows that this is not the case for other types of
$5$-partitions. So we have the following theorem.

\begin{thm}
Let $k\ge4$ and $n\ge2k$, and let $G$ be a $k$-homogeneous group of degree $n$.
Then $G$ has the same orbits on $(n-k)$-partitions of any given type as its
normalizer, except in the case of $\mathrm{PGL}(2,8)$, for which this 
assertion holds for partitions of type $(5,1,1,1,1)$ but for no other types.
\end{thm}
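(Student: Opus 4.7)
The plan is to combine CFSG with a trio of routine finite checks. By the classification of $k$-homogeneous groups for $k\ge 4$ and $n\ge 2k$, the candidate $G$ lies in a short list: $S_n$, $A_n$, the pair $\mathrm{PSL}(2,8)$ and $\mathrm{P}\Gamma\mathrm{L}(2,8)$ on $9$ points, the Mathieu groups $M_{11},M_{12},M_{23},M_{24}$ in their $4$- or $5$-transitive actions, and $\mathrm{P}\Gamma\mathrm{L}(2,32)$ of degree $33$. The strategy is to dispose of almost every entry on this list by showing $G=N_{S_n}(G)$ (making the claim vacuous), and then to handle the alternating group and the exceptional $\mathrm{PSL}(2,8)$ directly.

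First I would address the alternating group. Here $N_{S_n}(A_n)=S_n$, but every $(n-k)$-partition $P$ (with $k\ge 1$) has at least one part of size $\ge 2$, so $\mathrm{Stab}_{S_n}(P)$ contains a transposition and is not contained in $A_n$. Orbit--stabiliser then yields $|A_n\cdot P|=|S_n\cdot P|$, so the $A_n$-orbit and $S_n$-orbit of $P$ coincide uniformly on every partition type, as required.

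Next I would verify self-normalization for the remaining groups on the list apart from $\mathrm{PSL}(2,8)$. The projective semilinear groups $\mathrm{P}\Gamma\mathrm{L}(2,8)$ and $\mathrm{P}\Gamma\mathrm{L}(2,32)$ already incorporate every field automorphism, and a straightforward argument (their centralizers in $S_n$ are trivial, and the outer structure is already absorbed) gives self-normalization. For the Mathieu groups the same is standard: $\mathrm{Out}(M_n)$ is trivial for $n\in\{11,23,24\}$, while the outer automorphism of $M_{12}$ swaps the two conjugacy classes of point-stabilizers $M_{11}$ and so cannot be realised by conjugation inside $S_{12}$; combined with triviality of the centralizer (by $2$-transitivity), this gives $N_{S_{12}}(M_{12})=M_{12}$. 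In every such case the theorem's claim holds vacuously.

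The remaining case is $G=\mathrm{PGL}(2,8)=\mathrm{PSL}(2,8)$ on $n=9$ points, with $N_{S_9}(G)=\mathrm{P}\Gamma\mathrm{L}(2,8)$ obtained by adjoining the Frobenius automorphism $x\mapsto x^2$. Being $4$-homogeneous on a set of size $9$, $G$ is also $5$-homogeneous by complementation, hence transitive on partitions of type $(5,1,1,1,1)$; since its normalizer is also transitive there, the two orbit sets coincide (both being a single orbit). For each of the other four types in Table~\ref{t:4hom}, the tabulated orbit count for $G$ strictly exceeds that for $N_{S_9}(G)$ (for instance $12$ versus $4$ on type $(3,2,2,1,1)$, and $5$ versus $3$ on type $(2,2,2,2,1)$). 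Since $G\triangleleft N_{S_9}(G)$, each $N_{S_9}(G)$-orbit is a union of $G$-orbits, so a strict inequality of orbit counts forces the normalizer to fuse at least two $G$-orbits on each such type, showing that at least one $G$-orbit fails to be invariant. The main obstacle is the correctness of the orbit-count table, but this is a finite and routine calculation; every other step is structural.
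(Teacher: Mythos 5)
Your proposal is correct and follows essentially the same route as the paper: reduce via the classification of $k$-homogeneous groups ($k\ge4$) to a finite list, observe that every group on the list other than $A_n$ and $\mathrm{PGL}(2,8)=\mathrm{PSL}(2,8)$ is self-normalizing, handle $A_n$ by noting its orbits are $S_n$-invariant, and settle $\mathrm{PGL}(2,8)$ by $5$-homogeneity for type $(5,1,1,1,1)$ plus a computation (the orbit counts of Table~\ref{t:4hom}) for the remaining types. You simply supply more detail than the paper does at each of these steps (the transposition argument for $A_n$ and the explicit self-normalization checks), so there is nothing further to add.
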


For $k=3$, we have the following result. We shall say that the pair
$(G,\lambda)$ is \emph{closed} if each orbit of $G$ on $\lambda$-partitions is invariant under $N_{S_n}(G)$. Note that there are three types of partition to be
considered, namely $(4,1,\ldots,1)$, $(3,2,1,\ldots,1)$, and
$(2,2,2,1,\ldots,1)$. Note also that $(G,\lambda)$ is trivially closed if $G=N_{S_n}(G)$.

For $q$ an odd prime power, $q=p^d$, the quotient $\pgaml(2,q)/\psl(2,q)$ is
isomorphic to $C_2\times C_d$, where the factors are generated by a diagonal
automorphism (an element of $\pgl(2,q)$ with non-square determinant) and the
Frobenius field automorphism. If $d$ is even (so that $q$ is a square), this
group contains three subgroups of index~$2$. Two of these are, respectively,
$\psigl(2,q)$, and the group generated by $\pgl(2,q)$ and the square of the
Frobenius automorphism.  We use $\pxl(2,q)$ to denote the third subgroup
of index $2$, obtained by adjoining to $\psl(2,q)$ the
product of diagonal and Frobenius automorphisms. (When $q=9$
this group is better known as $M_{10}$, the point stabiliser in the Mathieu
group $M_{11}$.)

The main theorem of this section is the following.

\begin{thm}\label{main3}
Suppose that $G$ is a $3$-homogeneous subgroup of $S_n$, and $\lambda$ a type 
of $(n-3)$-partitions. Then $(G,\lambda)$ is closed if one of the
following holds:
\begin{itemize}\itemsep0pt
\item $G=N_{S_n}(G)$;
\item $G=A_n$;
\item $\lambda=(4,1,\ldots,1)$ and $G=\agl(1,8)$, $\pgl(2,8)$, $\pgl(2,9)$, $M_{10}$, $\psl(2,11)$,
$M_{22}$, $\pxl(2,25)$, or $\pxl(2,49)$.
\end{itemize}
No other $3$-homogeneous groups appear in a closed pair, with the possible exception of $\pxl(2,q)$ for $q\ge169$.
\label{t:norm}
\end{thm}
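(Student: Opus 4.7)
The plan is to proceed along the CFSG-based classification of $3$-homogeneous groups recalled after Theorem~2.8: every such $G \leq S_n$ lies in one of three families — (i) almost simple with $\psl(2,q) \leq G \leq \pgaml(2,q)$ in the natural action on the $q+1$ points of $\PG(1,q)$, (ii) affine, $G = \agl(d,2)$ on $2^d$ points, or (iii) a finite list of sporadic exceptions. For each candidate $G$ and each of the three possible partition types $\lambda \in \{(4,1,\ldots,1),(3,2,1,\ldots,1),(2,2,2,1,\ldots,1)\}$ I need to decide whether the natural action of $N_{S_n}(G)/G$ on the $G$-orbit set is trivial, equivalently whether $(G,\lambda)$ is closed.

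The easy clauses split off first. When $G = N_{S_n}(G)$ the pair is closed vacuously, and this already absorbs $S_n$, $\pgaml(2,q)$, $\agl(d,2)$ (self-normalizing in $S_{2^d}$ for $d \geq 2$, by the CFSG list of $3$-transitive groups), and all the self-normalizing sporadic $3$-homogeneous groups mentioned in the statement. For $G = A_n$, the setwise stabilizer in $S_n$ of any $\lambda$-partition with $\lambda \neq (1^n)$ contains the transposition of two points lying in a single part of size $\geq 2$, so each $S_n$-orbit on $\lambda$-partitions is already a single $A_n$-orbit, and $(A_n,\lambda)$ is closed for all relevant $\lambda$.

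The heart of the proof is the infinite family $\psl(2,q) \leq G < \pgaml(2,q)$. Write $q = p^d$ and $\pgaml(2,q)/\psl(2,q) \cong C_2 \times C_d$ for odd $q$, generated by a diagonal automorphism $\delta$ and the Frobenius $\phi$; one then needs to determine which coset images of $G$ act trivially on each orbit set. The case $\lambda = (4,1,\ldots,1)$ reduces to orbits on $4$-subsets of $\PG(1,q)$, classically parametrized by $S_3$-orbits of cross-ratios in $\mathbb{F}_q \setminus \{0,1\}$ under the action sending $x$ to $\{x,\, 1-x,\, 1/x,\, 1/(1-x),\, (x-1)/x,\, x/(x-1)\}$. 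The outer $\delta$ and $\phi$ act explicitly on these orbits, and for each candidate $(G,q)$ one checks whether the image of $G$ in $C_2 \times C_d$ acts trivially. Running through the possibilities yields precisely the exceptional list $\agl(1,8)$, $\pgl(2,8)$, $\pgl(2,9)$, $M_{10}$, $\psl(2,11)$, $M_{22}$, $\pxl(2,25)$, $\pxl(2,49)$ of the statement. For the remaining two partition types, the additional freedom in prescribing cross-ratios of two or three disjoint or overlapping $4$-tuples lets me exhibit, for every non-self-normalizing $G$ in the family, a single orbit moved by $\delta$ or $\phi$; this rules out all other candidate pairs. The finitely many sporadic $3$-homogeneous groups are treated analogously by direct orbit-counting, extending Table~\ref{t:3hom2} to include the normalizer and comparing totals.

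The principal technical obstacle, and the reason for the open clause in the statement, is the subfamily $G = \pxl(2,q)$ with $q = p^{2e}$ a square and $q \geq 169$. Here the relevant outer element combines $\delta$ with an odd power of $\phi^e$, and its action on the set of $S_3$-orbits of cross-ratios depends sensitively on how the cyclic group $\langle \phi^e \rangle$ partitions those $S_3$-orbits and on the quadratic character of certain auxiliary quantities. No uniform structural argument is visible, and direct case-checking becomes computationally infeasible as $q$ grows; I would leave these cases open, exactly as the statement does.
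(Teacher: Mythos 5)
Your decomposition matches the paper's: split off $G=N_{S_n}(G)$ and $G=A_n$ (your observation that the stabiliser of any $(n-3)$-partition contains an odd transposition inside a part of size $\ge 2$, so $S_n$- and $A_n$-orbits coincide, is correct and slightly more explicit than the paper), handle the sporadic groups by direct computation, and attack the infinite family $\psl(2,q)\le G\le\pgaml(2,q)$ via the cross-ratio parametrisation of $4$-sets and the action of the diagonal and Frobenius outer automorphisms on the $S_3$-orbits $\{z,1/z,1-z,1/(1-z),z/(z-1),(z-1)/z\}$. The $\pxl(2,q)$ caveat is also handled exactly as in the paper.

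However, there is a genuine gap at the heart of the argument: for the infinite family you say that one ``checks whether the image of $G$ \ldots acts trivially'' and that ``running through the possibilities yields precisely the exceptional list,'' but there are infinitely many $q$, and nothing in your proposal reduces the problem to a finite check. The paper's essential step is a counting argument: if a cross-ratio $6$-set is fixed by a nontrivial power $x\mapsto x^r$ of the Frobenius map, then every $z$ outside $\GF(r)$ satisfies one of the polynomial relations $z^r=1/z$, $z^r=1-z$, $z^r=z/(z-1)$ (forcing $q=r^2$) or $z^r=1/(1-z)$, $z^r=(z-1)/z$ (forcing $q=r^3$), each of degree at most $r+1$; comparing the number of such roots with $q-r$ gives $r\le 4$ in the first case and $r=2$ in the second, which is what confines the exceptions to a finite, computer-checkable list. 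An analogous count is needed for the groups not containing $\pgl(2,q)$ (the Paley-design estimate $(q+1)/4\le 3$ giving $q\in\{7,11\}$, whence $\psl(2,11)$ survives and $\psl(2,7)$ does not), and explicit representative-and-fixed-point arguments are needed to rule out closure for the types $(3,2,1,\ldots,1)$ and $(2,2,2,1,\ldots,1)$ for \emph{all} large $q$, not just to ``exhibit'' a moved orbit in principle. Without these finiteness reductions your claim that the exceptional list is complete is asserted rather than proved.
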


{This theorem answers question (\ref{q2}) at the beginning of this section, with the exception of the groups $\pxl(2,q)$ referred to in its statement.} Concerning question (\ref{q1}) the situation may be much more complicated as the following example shows.

\begin{example}
Let $n=17$, and  let $G$ be the $3$-homogeneous group
$\mathrm{PSL}(2,16)$. The normalizer of $G$ in $S_n$ is
$\mathrm{P}\Gamma\mathrm{L}(2,16)=G:4$, with one intermediate subgroup $G:2$.
Table~\ref{t:orbits} gives the number of $G$-orbits on the $14$-partitions of
various types, and the numbers with each of the three possible stabilisers.

\begin{table}[htbp]
\begin{center}
\small
\begin{tabular}{|c||c|c|c|c|}
\hline
Partition & $(4,1,\ldots,1)$ & $(3,2,1,\ldots,1)$ & $(2,2,2,1,\ldots,1)$ &
Total \\
\hline
Orbits & $3$ & $19$ & $72$ & $94$  \\
\hline
Stabiliser $G$ & $0$ & $12$ & $60$ & $72$ \\
\hline
Stabiliser $G:2$ & $2$ & $6$ & $10$ & $18$ \\
\hline
Stabiliser $G:4$ & $1$ & $1$ & $2$ & $4$ \\
\hline
\end{tabular}
\end{center}
\caption{\label{t:orbits}Stabilisers of orbits of $\mathrm{PSL}(2,16)$}
\end{table}
For the group $G=\mathrm{PSL}(2,2^p)$, with $p$ prime and $p>3$, the situation
is much simpler: no $(2^p-2)$ partition can be fixed by an element outside
$G$, and so every orbit has stabiliser $G$. (This also shows, for example, that
the numbers of orbits for $\mathrm{PSL}(2,32)$ are five times those for
$\mathrm{P}\Gamma\mathrm{L}(2,32)$ given in Table~\ref{t:3hom2}.)\qed
\end{example}

We now give the proof of Theorem~\ref{t:norm}.

\begin{pf}
We begin by listing the $3$-homogeneous groups.
\begin{itemize}\itemsep0pt
\item[(a)] $S_n$, $A_n$.
\item[(b)] (Some) subgroups of $\pgaml(2,q)$ containing $\psl(2,q)$, for $q$
a prime power (``some'' means ``all'' if and only if $q$ is even or congruent
to $3$ mod~$4$).
\item[(c)] $\agl(d,2)$.
\item[(d)] Finitely many ``sporadic'' examples: $\agl(1,8)$, $\agaml(1,8)$,
$M_{11}$, $M_{11}$ (degree~$12$), $M_{12}$, $2^4:A_7$, $M_{22}$, $M_{22}:2$,
$M_{23}$, $M_{24}$, and $\agaml(1,32)$.
\end{itemize}

We remark that, of these, the groups which are equal to their normalizers
(and so fall in the first case in the Theorem) are:
\begin{itemize}\itemsep0pt
\item[(a*)] $S_n$.
\item[(b*)] $\pgaml(2,q)$.
\item[(c*)] $\agl(d,2)$.
\item[(d*)] All except $\agl(1,8)$ and $M_{22}$.
\end{itemize}

Now type (a) are always closed. Type (c), and also type (d)
with the exception of $\agl(1,8)$ and $M_{22}$, are equal to their
normalizers, so are trivially closed. For the remaining cases in (d),
computation shows that, for $G=\agl(1,8)$ (degree $8$) or $G=M_{22}$ (degree
$22$), and $\lambda$ is a partition type of rank $n-3$, then $(G,\lambda)$ is
closed if and only if $\lambda=(4,1,\ldots,1)$.

In fact, the numbers of orbits on the three types of partitions for $G$ and
its normalizer are $(2,10,11)$ and $(2,4,5)$ for $G=\agl(1,8)$, and
$(2,5,11)$ and $(2,4,10)$ for $G=M_{22}$. Note that $M_{22}$ comes very
close: only two orbits of each of the other two types are fused by 
$M_{22}:2$.

So it remains to deal with type (b).

\paragraph{Subgroups containing $\pgl(2,q)$} $\phantom{.}$

\subparagraph{Partitions of type $\lambda=(4,1,\ldots,1)$} $\phantom{.}$

These partitions correspond naturally to $4$-subsets. Now orbits
of $\pgl(2,q)$ on $4$-tuples are parametrised by \emph{cross ratio}: there is
some flexibility about the definition, but I will assume that the cross ratio
of $(\infty,0,1,a)$ is $a$. Now the $24$ orderings of a $4$-set give rise to
a set of $6$ cross ratios (or occasionally fewer) of the form
\[\{z,1/z,1-z,1/(1-z),z/(z-1),(z-1)/z\}.\] So $\GF(q)\setminus\{0,1\}$ is
partitioned into sets of $6$ (or fewer) corresponding to orbits of
$\pgl(2,q)$ on $4$-sets.

Now $\pgaml(2,q)$ is generated by $\pgl(2,q)$ and the \emph{Frobenius map}
$x\mapsto x^p$, where $q$ is a power of $p$, say $q=p^t$. Thus there is a
cyclic group of order $t$ permuting the orbits (or the sets as above). To
show that no proper subgroup of $\pgaml(2,q)$ containing $\pgl(2,q)$ is
good, it suffices to find a $6$-set which is fixed by no power of the
Frobenius map except the identity.

Suppose that we have a $6$-set $\{z,1/z,1-z,1/(1-z),z/(z-1),(z-1)/z\}$ which
is fixed by a non-trivial power of the Frobenius map; we can assume that this
has the form $x\mapsto x^{p^u}$ where $u$ divides $t$. We put $t=uv$ and
$p^u=r$, so that $q=r^v$ and the map under consideration has fixed field
$\GF(r)$. Now, for every $z$, $z^r\in\{z,1/z,1-z,1/(1-z),z/(z-1),(z-1)/z\}$.
There are six possibilities:
\begin{itemize}\itemsep0pt
\item $z^r=z$. Then $z\in\GF(r)$.
\item $z^r\in\{1/z,1-z,z/(z-1)\}$. In each of these cases, we find that 
$z^{r^2}=z$, so $z\in\GF(r^2)$. So we may assume that $q=r^2$.
\item $z^r\in\{1/(1-z),(z-1)/z\}$. In these cases, we find that 
$z^{r^3}=z$; so we may assume that $q=r^3$.
\end{itemize}
Only one of these possibilities can hold. We may assume that $q\ne r$, so that
$z$ can be chosen so that the first possibility does not hold.

Suppose that $q=r^2$. Now the above argument shows that the $r^2-r$ elements
outside $\GF(r)$ satisfy one of the three equations $z^r=1/z$, $z^r=1-z$, or
$z^r=z/(z-1)$. These are polynomials of degrees $r+1$, $r$, $r+1$ respectively;
so $3r+2\ge r^2-r$, giving $r\le 4$. Now $\pgl(2,4)\cong A_5$ falls under
case (a); and computation shows that $(\pgl(2,9),\lambda)$ is closed but
$\pgl(2,16)$ and $\pgl(2,16):2$ are not. (Both the last two groups have three
orbits on $4$-sets, but $\pgl(2,16):4$ has only two.)

Now suppose that $q=r^3$. We argue similarly to say that the elements outside
$\GF(r)$ satisfy one of the two equations $z^r=1/(1-z)$ or $z^r=(z-1)/z$,
both polynomials of degree $r+1$. Thus $r^3-r\le2(r+1)$, with only the
solution $r=2$. The pair $(\pgl(2,8),\lambda)$ is good, since $\pgl(2,8)$ is
$4$-homogeneous.

For the other two partition types, the argument is less elegant.

\subparagraph{Partitions of type $\lambda=(3,2,1,\ldots,1)$} $\phantom{.}$

In this case, each orbit has a
representative in which the $3$-set is $\{\infty,0,1\}$, by $3$-transitivity
of $G$. The elements of $\pgl(2,q)$ which map this set to itself are 
the maps $z\mapsto f(z)$, where $f(z)$ is one of the six linear fractional
expressions which came up in our discussion of cross ratio. Moreover, all
three points are fixed by the Frobenius map.

Suppose that $p$ is odd. Take $x\in\GF(p)\setminus\{0,1\}$ and $y$ in no
proper subfield of $\GF(q)$, and consider the partition as above whose $2$-set
is $\{x,y\}$.

The points fixed by the above transformations are $x=-1$, $x=2$,
$x=\frac{1}{2}$,
and $x$ a primitive $6$th root of $1$. If $p\ne3$ or $7$ we can choose $x$ to
satisfy none of these, so there is only one set in the orbit. But if we
choose $y$ to be a primitive element, then it is not fixed by any power of the
Frobenius map, so this set is in a regular orbit of this map.

If $p=3$, then $x=2$ is fixed by three maps $z\mapsto 1/z$, $z\mapsto1-z$,
and $z\mapsto z/(z-1)$. So if the orbit is fixed by the Frobenius map, then
$y$ must satisfy $y^r\in\{1/y,1-y,y/(y-1)\}$.
There are at most $3r+3$ such elements. So $r^v-r\le 3r-3$, whence $r=2$.
But the computer establishes that not all $\pgl(2,9)$-orbits are fixed
by $\pgaml(2,9)$.

Suppose that $p=7$. A similar but easier argument applies, since
each of $2$, $4$ and $6$ is fixed by just a single element, so we find
$r^v-r\le r+1$, which is impossible.

Lastly we have the case $p=2$. We may assume that $t>2$, since $\pgl(2,4)
\cong A_5$. Now if $y\ne 1/x,1-x,x/(x-1)$, then only the identity in $\pgl(2,q)$
fixes this partition. Choosing $y$ to be a primitive element of $\GF(q)$ shows
that the group generated by the Frobenius map acts regularly on the orbit of
this partition.

\subparagraph{Partitions of type $(2,2,2,1,\ldots,1)$} $\phantom{.}$

We can assume that an orbit we are considering contains the partition
$\{\infty,0\}$, $\{1,a\}$ and $\{b,c\}$.

Suppose first that $p>2$, and take $a=2$. The three linear fractional
transformations fixing the pair of sets making up the first two cycles are
$z\mapsto2/z$, $z\mapsto(z-2)/(z-1)$, and $z\mapsto2(z-1)/(z-2)$, have among
them at most six fixed points, namely $\pm\sqrt{2}$, $1\pm\sqrt{-1}$, and
$2\pm\sqrt{-2}$; so there is a point $a$ fixed by none of these. If we
take $b$ and $c$ to be linearly independent over $\GF(p)$, then only the
identity in $\pgl(2,q)$ fixes the three sets; and if we take $z\ne y^p$,
then we find that they are not fixed by any power of the Frobenius map.

If $p=2$, the argument is similar. If $t$ is even, then we have a subfield
$\GF(4)$; if $t$ is divisible by $3$, a subfield $\GF(8)$. If neither of
these occurs, then no field automorphism can fix a partition of shape $\lambda$,
since only permutations of order dividing $48$ can do so.

\paragraph{Groups not containing $\pgl(2,q)$} $\phantom{.}$

If $q$ is not a square, then any subgroup of $\pgaml(2,q)$ containing
$\psl(2,q)$ but not $\pgl(2,q)$ must lie inside $\psigl(2,q)$. Moreover, we
may assume that $G=\psigl(2,q)$, since any other subgroup is contained in
a group twice as large which does itself contain $\pgl(2,q)$. This case only
arises if $q\equiv3$ (mod~$4$), since otherwise $\psigl(2,q)$ is not
$3$-homogeneous.

If the $\psigl(2,q)$-orbit of a partition $P$ is fixed by $\pgaml(2,q)$,
then a set in that orbit must be fixed by an element of
$\pgaml(2,q)\setminus\psigl(2,q)$, since then its stabiliser will be twice as
large, and the orbit the same size. We can assume that such an element has
$2$-power order, and all its cycles have the same size (since $\psigl(2,q)$
has odd order). This excludes shape $(3,2,1,\ldots,1)$, so we have to consider
the other two types. Moreover, $q$ is an odd power of $p$, so these maps 
do not involve field automorphisms.

First consider type $(4,1,\ldots,1)$, so we are looking for an element fixing
a $4$-set, acting on it as either a double transposition or a $4$-cycle.
By $3$-homogeneity, we can consider $4$-sets of the form $\{\infty,0,1,a\}$.
For a double transposition. There are three possibilities:
\begin{itemize}\itemsep0pt
\item $(\infty,0)(1,a)$: $z\mapsto a/z$ does this. Its determinant is $-a$,
which is a nonsquare if and only if $a$ is a square.
\item $(\infty,1)(0,a)$: $z\mapsto(z-a)/(z-1)$ does this. Its determinant
is $-1+a$, which is a nonsquare if and only if $1-a$ is a square.
\item $(\infty,a)(0,1)$: $z\mapsto(az-a)/(z-a)$ does this. Its determinant
is $-a^2+a$, which is a nonsquare if and only if $a(a-1)$ is a square.
\end{itemize}
Now the product of these three numbers is $-a^2(a-1)^2$, which is a nonsquare.
So $0$ or $2$ of them are squares for every $a$. Indeed, it is well-known
(from the construction of the Paley design) that there are $(q+1)/4$ elements
$a$ for which $a$ and $1-a$ are both non-squares.
Now we consider $4$-cycles. Up to inversion, there are three possibilities:
\begin{itemize}\itemsep0pt
\item $(\infty,0,1,a)$: $z\mapsto 1/(cz+1)$, where $1/(c+1)=a$ and $ac+1=0$;
these equations have a unique solution $a=2$.
\item $(\infty,0,a,1)$: $z\mapsto a/(cz+1)$, where $a/(ac+1)=1$ and $c+1=0$;
the solution is $a=\frac{1}{2}$.
\item $(\infty,1,0,a)$: $z\mapsto (z-1)/(z+c)$, where $-1/c=a$ and $a+c=0$;
the solution is $a=-1$.
\end{itemize}
So, if every orbit is accounted for, we have $(q+1)/4\le 3$, so $q=7$ or
$q=11$. It can be checked (by hand or by computer) that $\psl(2,7)$ for type $(4,1,\dots,1)$ is not closed, but $\psl(2,11)$ is.

Now consider the type $(2,2,2,1,\ldots,1)$. This time we can assume that the
partitioned $6$-set is $\{\{\infty,0\},\{1,a\},\{b,c\}\}$, and it is fixed
by an involution, which fixes one or all of the $2$-sets. If there is a
cycle $(\infty,0)$, then $1$ maps to $a$, $b$ or $c$, and we find that the
product of any two of $a,b,c$ is the third. (For example, if $(1,a)$ is a
cycle, then the map is $z\mapsto a/z$, and $a/b=c$.)

In the remaining case, we have four triple transpositions to consider, namely
$(\infty,1)(0,a)(b,c)$, $(\infty,a)(0,1)(b,c)$, $(\infty,b)(0,c)(1,a)$, or
$(\infty,c)(0,b)(1,a)$. The third and fourth are equivalent under interchange
of $b$ and $c$. We have:
\begin{itemize}\itemsep0pt
\item $(\infty,1)(0,a)(b,c)$: we find $1-a=(1-b)(1-c)$.
\item $(\infty,a)(0,1)(b,c)$: we find $(a-b)(a-c)=a(1+a)$.
\item $(\infty,b)(0,c)(1,a)$: we find $a(b-c)=a-b$.
\end{itemize}
In each case, given $a$ and $b$, there is only one choice of $c$, so $q\le 5$,
a contradiction.

\medskip

The remaining class to be considered are the groups $\pxl(2,q)$. As mentioned
earlier, we have checked by computer the odd prime power squares up to $121$,
and found that $\pxl(2,q)$ acting on $(4,1,\ldots,1)$ partitions is closed
for $q=9$, $25$ and $49$, but not for $q=81$ or $121$.\qed
\end{pf}

\section{Groups having only one orbit in a given kernel type}\label{sem1}

 The remainder of this paper is dedicated to the application to semigroup theory of the results found above.
We want to describe the structure (elements, ranks, automorphisms, congruences, regularity, idempotent generation, etc.) of semigroups generated by a $k$-homogenous subgroup of $S_n$  and some singular maps of rank larger than $n/2$. We will use several times the well known fact (\cite[p.11]{Ho95}) that if $S$ is a finite semigroup and $a\in S$, then there exists a natural number $\omega$ such that $a^\omega$ is idempotent. 

In this section we are going to study the semigroups generated by a singular transformation $t$, such that $\rank(t)\ge n/2$, and a permutation group that has only one orbit on the kernel type of $t$.  
 
We start by noting the following. Suppose that the kernel of $t$ has type $(l_1,\ldots,l_k)$ and $m$ is the largest natural such that $l_m>1$. Then $G$ must be $(\sum^{m}_{i=1} l_i)$-homogeneous and hence, given that the rank of $t$ is $k$, the group must be $p$-homogeneous, for some $p\in \{n-k+1,\ldots,2(n-k)\}$. The smallest value of $p$ is attained if the kernel type is $(n-k+1,1,\ldots,1)$, and the largest value is attained for kernel type $(2,\ldots,2,1,\ldots,1)$. Therefore, for any practical considerations we might assume that our groups are $(n-k+1)$-homogeneous. 

\begin{thm}\label{main3.3}
Let $t$ be a singular map in $T(X)$, with $X=\{1,\ldots,n\}$, and suppose that $t$ has kernel type $(l_1,\ldots,l_k)$, with $k\ge n/2$; let $G$ be a group having only one orbit in the partitions of that type. { Let $E$ denote the set of idempotents of $\langle t,G\rangle\setminus G$.} Then $$\langle t,G\rangle\setminus G =\langle t,S_n\rangle\setminus S_n =\langle E,t\rangle.$$
\end{thm}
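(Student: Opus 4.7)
The plan is to close the cycle of inclusions
\[
\langle E,t\rangle\ \subseteq\ \langle t,G\rangle\setminus G\ \subseteq\ \langle t,S_n\rangle\setminus S_n\ \subseteq\ \langle E,t\rangle.
\]
The first two inclusions are essentially formal. Any word in the generators $\{t\}\cup G$ (respectively $\{t\}\cup S_n$) containing at least one occurrence of $t$ has rank at most $k<n$ and is therefore singular, so $\langle t,G\rangle\setminus G$ and $\langle t,S_n\rangle\setminus S_n$ are subsemigroups of $T_n$; since $E\subseteq\langle t,G\rangle\setminus G$ by definition and $G\le S_n$, both inclusions on the left follow immediately.

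The substantive content is $\langle t,S_n\rangle\setminus S_n\subseteq\langle t,G\rangle$, a generalization of McAlister's theorem. Because $\langle t,S_n\rangle\setminus S_n$ is generated as a semigroup by $S_n\cdot t\cdot S_n$, precisely the set of all rank-$k$ maps with kernel type $(l_1,\dots,l_k)$, it suffices to show every such map lies in $\langle t,G\rangle$. Fix $s=\sigma_1 t\sigma_2$ with kernel $P=\sigma_1^{-1}(\ker t)$ and image $B=\sigma_2(\im t)$. The one-orbit hypothesis on partitions of type $(l_1,\dots,l_k)$ supplies $g_1\in G$ with $\ker(g_1 t)=P$. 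That same hypothesis, as noted in the paragraph preceding the theorem, forces $G$ to be $(n-k+m)$-homogeneous, where $m$ is the number of non-singleton parts; combining $k\ge n/2$, the Livingstone--Wagner theorem, and complementation of homogeneity degrees, this upgrades to $k$-homogeneity of $G$ on subsets, so we obtain $g_3\in G$ with $g_3(\im t)=B$. To also align the bijection $P\to B$, one uses longer products of the form $g_1 t g_2 t g_3$: for each $g_2\in G$ sending $\im t$ to a transversal of $\ker t$, this product has kernel $P$, image $B$, and an induced bijection permuted by an element of $S_k$ depending on $g_2$. Varying $g_2$ realizes every bijection $P\to B$, so $s\in\langle t,G\rangle$.

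Finally, $\langle t,S_n\rangle\setminus S_n\subseteq\langle E,t\rangle$ is obtained by exploiting the equality just established. Every rank-$k$ idempotent with kernel type $(l_1,\dots,l_k)$ lies in $\langle t,G\rangle\setminus G$, hence in $E$: there is one such idempotent for each pair consisting of a partition $P$ of this type and a transversal $T$ of $P$. A standard argument using the completely simple structure of the $\mathcal{J}$-class of $t$ in $T_n$ then shows that each generator $s$ of $S_n\cdot t\cdot S_n$ can be expressed as $e_1\cdot t\cdot e_2$ with $e_1,e_2\in E$ in the $\mathcal{R}$- and $\mathcal{L}$-classes of $s$ respectively; lower-rank elements in $\langle t,S_n\rangle\setminus S_n$ are handled in the same way using the powers $t^{\omega}$, themselves idempotents of the appropriate ranks and kernel types, hence in $E$. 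The main obstacle throughout is the bijection-matching step in the preceding paragraph: the one-orbit hypothesis and the deduced homogeneity give the partition and the image separately, but matching the precise assignment $P\to B$ via $g_1 t g_2 t g_3$ requires a careful combinatorial use of $G$'s action on transversals, and this is where the condition $k\ge n/2$ plays its essential role.
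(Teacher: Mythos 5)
Your overall architecture (a cycle of inclusions, with the substantive step being $\langle t,S_n\rangle\setminus S_n\subseteq\langle t,G\rangle$ via the generators $gth$) matches the paper's, and the first two inclusions are indeed formal. But there is a genuine gap at exactly the point you flag as the ``main obstacle'': the bijection-matching step is asserted, not proved, and the mechanism you propose for it does not work. You claim that for fixed $g_1,g_3$ the products $g_1tg_2tg_3$, as $g_2$ ranges over elements of $G$ carrying $\im t$ to a transversal of $\ker t$, realize \emph{every} bijection from the parts of $P$ to $B$. A counting argument rules this out in general: the number of rank-$k$ maps with prescribed kernel $P$ and prescribed image $B$ is $k!$, whereas varying $g_2$ yields at most $|G|$ elements, and $|G|$ is typically far smaller than $k!$ (e.g.\ $G=\mathrm{PGL}(2,q)$ has order $q^3-q$ while $k$ is close to $n=q+1$). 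So a single middle factor cannot suffice; one is forced to use products of unbounded length, and then the real task is to show that the permutations of the image induced by the available ``return maps'' generate all of $S_k$. That is precisely the content the paper supplies and you omit.

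The paper's route through this is worth noting: it first produces, from the one-orbit hypothesis, an idempotent in $\langle t,G\rangle$ with \emph{any} prescribed kernel of the given type and \emph{any} prescribed transversal as image (Lemma \ref{lem1}, via taking $(tk)^\omega$ for suitable $k\in G$); it then chains such idempotents to move images one point at a time between $k$-sets (Corollary \ref{corol3.7}); and, crucially, it shows that any transposition of two image points of a rank-$k$ map $p$ can be realized as $pe_1e_2e_3$ for three idempotents of $\langle t,G\rangle$, using a buffer point $c\notin Xp$. This last lemma is what resolves the bijection-matching and simultaneously yields the equality with $\langle E,t\rangle$, since every correcting permutation $\sigma$ of the image factors into transpositions, each absorbed into a product of idempotents. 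Your appeal to ``the completely simple structure of the $\mathcal{J}$-class'' to write $s=e_1te_2$ runs into the same difficulty: choosing $e_1$ and $e_2$ in the right $\mathcal{R}$- and $\mathcal{L}$-classes fixes the kernel and image of the product but not the induced bijection, so this does not close the argument either. To repair your proof you would need to add, in some form, the three-idempotent transposition lemma (or an equivalent device for generating $S_k$ on the image inside $\langle t,G\rangle$).
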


The proof of this theorem will follow from a sequence of lemmas. Throughout this section $t\in T_n$ will be a rank $k$ map of kernel type $(l_1,\ldots,l_k)$, and $G\le S_n$ will be a $(n-k+1)$-homogeneous group  having only one orbit on the partitions of type  $(l_1,\ldots,l_k)$. 

We now introduce some notation. Given the rank and the kernel type of $t$ we have
\[
{t}=\left(
\begin{array}{ccc}
A_{1}&\cdots&A_{k}\\
a_{1}&\cdots&a_{k}
\end{array}\right), 
\] where $|A_i|=l_i$ (for all $i\in \{1,\ldots,k\}$). 

Throughout this section we will assume that the fixed map $t$ has  kernel $T=(A_1,\ldots A_k)$ of type $(l_1,\ldots ,l_k)$.

Observe that for every $g,h\in G$ we have 
\[
{g^{-1}th}=\left(
\begin{array}{ccc}
A_{1}g&\cdots&A_{k}g\\
a_{1}h&\cdots&a_{k}h
\end{array}\right). 
\]
Since $k\ge n/2$ and the group is $(n-k+1)$-homogeneous, it follows that the group is also $k$-homogeneous. Thus given any $k$-set $Y$ contained in $X$, there exists  $th\in \langle G,t\rangle$ such that $Xth=Y$. Similarly, given any partition $Q=(B_1,\ldots,B_k)$ of $X$ of type $(l_1,\ldots,l_k)$, since $G$ has only one orbit on the partitions of this type, it follows that there exists $g\in G$ such that $\{A_1,\ldots,A_k\}g=\{B_1,\ldots,B_k\}$ and hence the kernel of $g^{-1}t$ is $(B_1,\ldots,B_k)$. This proves the following lemma. 

\begin{lem}\label{lem1a)}
Given any partition $Q$ of type $(l_1,\ldots,l_k)$ and any $k$-set $Y\subseteq X$, there exist $g,h\in G$ such that $\ker(g^{-1}th)=Q$ and $Xg^{-1}th=Y$.
\end{lem}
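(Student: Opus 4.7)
The plan is to exploit the fact that post-composing $t$ by an element of $G$ preserves its kernel while altering its image, and pre-composing $t$ by the inverse of an element of $G$ preserves its image while altering its kernel. Consequently the two required conclusions decouple: $h$ is chosen to arrange $Xth=Y$, and $g$ is chosen to arrange $\ker(g^{-1}t)=Q$.

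First I would confirm the assertion made in the paragraph preceding the lemma that $G$ is $k$-homogeneous. Since $k\ge n/2$, we have $n-k\le n/2$, and from the hypothesis that $G$ is $(n-k+1)$-homogeneous, the Livingstone--Wagner theorem together with the $G$-equivariant complementation bijection between $k$-subsets and $(n-k)$-subsets yields $k$-homogeneity. Applied to the prescribed $k$-set $Y\subseteq X$, this produces $h\in G$ with $(Xt)h=Y$, hence $Xth=Y$; and since post-composing any map by a bijection does not merge or split fibres, $\ker(th)=\ker(t)=\{A_1,\ldots,A_k\}$.

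Next, because $G$ has a single orbit on partitions of type $(l_1,\ldots,l_k)$ and both $\{A_1,\ldots,A_k\}$ and $Q=\{B_1,\ldots,B_k\}$ are partitions of this type, there exists $g\in G$ with $\{A_1g,\ldots,A_kg\}=Q$. A short computation of the fibres of $g^{-1}t$---two points $x,y$ satisfy $xg^{-1}t=yg^{-1}t$ iff $xg^{-1}$ and $yg^{-1}$ lie in a common $A_i$, iff $x$ and $y$ lie in a common $A_ig$---then gives $\ker(g^{-1}t)=\{A_1g,\ldots,A_kg\}=Q$. Combining the two choices, $\ker(g^{-1}th)=\ker(g^{-1}t)=Q$ and $Xg^{-1}th=(Xg^{-1})th=Xth=Y$, the last equality because $g^{-1}$ permutes $X$.

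The only substantive ingredient is the promotion of $(n-k+1)$-homogeneity to $k$-homogeneity, which boils down to Livingstone--Wagner plus complementation; the remainder is a direct bookkeeping check on kernels and images, so I do not anticipate any real obstacle.
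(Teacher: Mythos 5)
Your argument is essentially identical to the paper's: the authors also observe that $g^{-1}th$ has kernel classes $A_1g,\ldots,A_kg$ and image $\{a_1h,\ldots,a_kh\}$, pick $h$ using the $k$-homogeneity of $G$ (deduced, as you do, from $(n-k+1)$-homogeneity and $k\ge n/2$) to hit the image $Y$, and pick $g$ using the single orbit on partitions of type $(l_1,\ldots,l_k)$ to realise the kernel $Q$. The only difference is that you spell out the Livingstone--Wagner/complementation justification for the $k$-homogeneity step, which the paper simply asserts; the bookkeeping on kernels and images is the same.
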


The previous result shows that $\langle G,t\rangle$ has rank $k$ maps of every possible image and kernel. The next result provides the anlogous result for idempotents.

\begin{lem}\label{lem1}
Given any partition $Q$ of type $(l_1,\ldots,l_k)$ and any $k$-set $Y\subseteq X$ such that $Y$ is a transversal for $Q$, there exists an idempotent $e\in \langle t,G\rangle$ such that $\ker(e)=Q$ and $Xe=Y$.
\end{lem}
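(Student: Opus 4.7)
\begin{pf}[Proof plan for Lemma \ref{lem1}.]
My plan is to produce the desired idempotent as a suitable power of the rank-$k$ element supplied by Lemma \ref{lem1a)}. Concretely, I would first invoke Lemma \ref{lem1a)} to obtain elements $g,h\in G$ such that the map $a:=g^{-1}th\in\langle t,G\rangle$ satisfies $\ker(a)=Q$ and $Xa=Y$. The lemma will then follow from the standard fact (quoted in the opening paragraph of Section~\ref{sem1}) that in a finite semigroup every element has an idempotent power.

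The crux of the argument is to show that kernel and image are preserved under iteration of $a$. Since $Xa=Y$ and $Y$ is a transversal for $\ker(a)=Q$, the map $a$ is injective on $Y$ and sends $Y$ into $Xa=Y$; hence the restriction $a|_Y\colon Y\to Y$ is a bijection, i.e.\ a permutation of $Y$. From this I would deduce by a short induction that $Xa^n=Y$ for every $n\geq 1$, and moreover that $\ker(a^n)=\ker(a)=Q$: the inclusion $\ker(a)\subseteq\ker(a^n)$ is automatic, and for the reverse, if $xa^n=ya^n$ then $xa,ya\in Y$ and $(xa)(a^{n-1}|_Y)=(ya)(a^{n-1}|_Y)$, and bijectivity of $a^{n-1}|_Y$ forces $xa=ya$.

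With kernel and image stable, I would pick $\omega\geq 1$ such that $e:=a^{\omega}$ is idempotent; this $\omega$ exists because $\langle t,G\rangle$ is finite. Then $e\in\langle t,G\rangle$, $\ker(e)=Q$, and $Xe=Y$, as required. Note that $e$ lies in $\langle t,G\rangle\setminus G$ because its rank is $k<n$.

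There is no real obstacle here: once Lemma \ref{lem1a)} provides a rank-$k$ map with the prescribed kernel and image, the hypothesis that $Y$ is a transversal for $Q$ is exactly what is needed to ensure that the restriction to $Y$ is a permutation, and everything else is routine semigroup bookkeeping.
\end{pf}
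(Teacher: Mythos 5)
Your argument is correct and is essentially the paper's own proof: the paper also starts from $a=g^{-1}th$ supplied by Lemma \ref{lem1a)} and takes an idempotent power (written there as $g^{-1}(thg^{-1})^{\omega}g$, which is the same element as your $a^{\omega}$). Your verification that the kernel and image are stable under iteration is a welcome bit of extra detail that the paper leaves implicit via the constancy of the rank.
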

\begin{pf}
{ By the previous lemma we know that there exist $g,h\in G$ such that $\ker(g^{-1}th)=Q$ and $Xg^{-1}th=Y$. Since $Y$ is a transversal for $Q$, it follows that there exists $k\in G$, namely $k:=hg^{-1}$, such that $\rank(tkt)=\rank(t)$. Therefore, every element in $\langle tk\rangle :=\{(tk)^i\mid i\in \mathbb{N}\}$, the monogenic semigroup generated by $tk$, has the same rank as $t$. Since every finite semigroup contains an idempotent, we conclude that $\langle tk\rangle$ contains an idempotent, say $(tk)^\omega$, and hence
  $g^{-1}(tk)^\omega g=g^{-1}(thg^{-1})^\omega g$ is also an idempotent with kernel $Q$ and image $Y$. The result follows.\qed 
}\end{pf}

In order to increase the readability of the arguments we introduce some notation. 
Given a partition $P=(A_{1},\ldots,A_{k})$ of $X$,  and a transversal $S=\{a_{1},\ldots,a_{k}\}$ for $P$, where $a_{i}\in A_{i}$ (for all $i$),  we represent $A_i$ by $[a_i]_{_P}$ and
the pair $(P,S)$ induces an idempotent mapping defined by $[a_i]_{_P}e=\{a_i\}$. Conversely, every idempotent can
be so constructed from a partition and a transversal. With this notation, we can write the  idempotent   $$e=\left(\begin{array}{cccccc}
[a_1]_{_P}&\ldots &[a_k]_{_P}\\
a_1      &\ldots & a_k
\end{array}\right)$$ in the more compact form $e=([a_1]_{_P}, \ldots,[a_k]_{_P})$.
This notation extends to $e=([\underline{a_1},b]_{_P},[a_2]_{_P}, \ldots,[a_k]_{_P})$ when  $b\in [a_1]_{_P}$ and
$[a_i]_{_P}e=\{a_i\}$. By $([a_1],\ldots ,[\underline{a_i},{b}], \ldots,[a_k])$ we denote the set of all
idempotents $e\in T_n$ with image $\{a_1,\ldots , a_k\}$ and such that the $\Ker(e)$-class of $a_i$ contains (at least)
two elements: $a_i$ and $b$, where the underlined element (in this case $a_i$) is the image of the class under
$e$. 

\begin{lem}
Let $q_1,q_2\in \langle G,t\rangle$ be two maps of rank $k$ such that  $Xq_1=\{b_1,\ldots, b_k\}$ and $Xq_2=\{b_2,\ldots, b_{k+1}\}$. Then there exists an idempotent $e\in  \langle G,t\rangle$ such that $Xq_1e=Xq_2$.
\end{lem}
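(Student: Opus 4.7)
The plan is to realise $e$ as an idempotent produced by Lemma \ref{lem1}. Since $Xq_1$ and $Xq_2$ are $k$-subsets sharing $k-1$ elements, we have $Xq_1\setminus Xq_2=\{b_1\}$ and $Xq_2\setminus Xq_1=\{b_{k+1}\}$. If we can find an idempotent $e\in\langle G,t\rangle$ with image $\{b_2,\ldots,b_{k+1}\}=Xq_2$ such that $b_1$ lies in the same $\ker(e)$-class as $b_{k+1}$, then the elements $b_2,\ldots,b_k$ (being in the image) will be fixed, $b_1$ will map to $b_{k+1}$, and therefore $Xq_1e=\{b_{k+1},b_2,\ldots,b_k\}=Xq_2$, as required.

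First I would exhibit a partition $Q$ of $X$ of type $(l_1,\ldots,l_k)$ with the properties that $\{b_2,\ldots,b_{k+1}\}$ is a transversal for $Q$ and that $b_1$ and $b_{k+1}$ lie in the same class of $Q$. Since $t$ is singular, we have $l_1\geq 2$, so we may designate the class of size $l_1$ to be the class of $b_{k+1}$ and place $b_1$ inside it. Then we assign $b_i$ to be the distinguished element of the $i$-th class (of size $l_i$) for $i=2,\ldots,k$. It remains to distribute the other $n-k-1$ points of $X\setminus\{b_1,\ldots,b_{k+1}\}$ into the remaining capacity $(l_1-2)+\sum_{i=2}^{k}(l_i-1)=n-k-1$ in any way we please; this is always possible by a direct counting check.

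Next I would apply Lemma \ref{lem1} with this partition $Q$ and the transversal $\{b_2,\ldots,b_{k+1}\}$ to obtain an idempotent $e\in\langle G,t\rangle$ satisfying $\ker(e)=Q$ and $Xe=\{b_2,\ldots,b_{k+1}\}$. The verification that $Xq_1e=Xq_2$ then proceeds as outlined above: $b_2,\ldots,b_k$ are fixed by $e$ because they lie in $Xe$ and $e$ is an idempotent, whereas $b_1 e=b_{k+1}$ because $b_1$ lies in the $\ker(e)$-class of $b_{k+1}$, which is mapped to $\{b_{k+1}\}$ by $e$.

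The only substantive obstacle is the combinatorial step of producing the partition $Q$, and that is easily resolved by the observation that $l_1\ge2$; all the rest is bookkeeping on top of Lemma \ref{lem1}. No further use of the $(n-k+1)$-homogeneity of $G$ is needed beyond what is already packaged inside that lemma.
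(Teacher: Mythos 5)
Your proof is correct and follows essentially the same route as the paper's: both construct a partition of the kernel type of $t$ in which $b_1$ and $b_{k+1}$ share a class while $b_2,\ldots,b_{k+1}$ form a transversal, and then invoke the preceding lemma to obtain the required idempotent. Your version merely spells out the counting check (using $l_1\ge 2$) that the paper leaves implicit.
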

\begin{pf}
By the previous lemma,  given any partition of the same type as the kernel of $t$, and any transversal for it, there exists in $\langle t,G\rangle$ an idempotent with that partition as kernel and that transversal as image. Therefore we can  pick a partition of the same type as the kernel of $t$ with the following parts:   $Q_0=\{\{b_1,{b_{k+1}},\ldots\},\{b_2,\ldots\},\ldots ,\{b_{k},\ldots\}\}$; it is clear that  $Xq_2$ is a transversal for $Q_0$ and hence the idempotent $e=[[ b_1,\underline{b_{k+1}}]_{Q_0},[b_2]_{Q_0},\ldots,[b_k]_{Q_0}]$ satisfies the desired $Xq_1e=Xq_2$. \qed
\end{pf}

Since given any two $k$-sets $Y,Z\subseteq X$, there exists a sequence of $k$-subsets of $X$, say $(Y_1,\ldots,Y_m)$, such that $|Y_i\cap Y_{i+1}|=k-1$, with $Y_1=Y$ and $Y_m=Z$, the following result is a consequence of the application of the previous lemma as many times as needed. 

\begin{cor}\label{corol3.7}
Let $q_1,q_2\in \langle G,t\rangle$ be two rank $k$ maps. Then there exists a sequence of idempotents $e_1,\ldots, e_j$ such that $Xq_1e_1\ldots e_j=Xq_2$. 
\end{cor}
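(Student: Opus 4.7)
\medskip

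\noindent\textbf{Proof proposal.} The plan is a direct iteration of the previous lemma along a chain of $k$-subsets. Set $Y := Xq_1$ and $Z := Xq_2$. The first step is to invoke the observation made just before the statement: the Johnson graph on $k$-subsets of $X$ (with edges joining subsets that share $k-1$ elements) is connected, so I can pick a sequence of $k$-sets
\[
Y = Y_1,\; Y_2,\; \ldots,\; Y_m = Z,\qquad |Y_i \cap Y_{i+1}| = k-1,
\]
for each $i \in \{1,\ldots,m-1\}$.

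Next, for each such $i$ I would produce an idempotent $e_i \in \langle G,t\rangle$ with $Y_i e_i = Y_{i+1}$, and then concatenate. To produce $e_i$: Lemma~\ref{lem1a)} first supplies rank-$k$ maps $r_i, r_{i+1} \in \langle G,t\rangle$ with images $Y_i$ and $Y_{i+1}$ respectively. The hypothesis of the previous lemma (that the two images overlap in $k-1$ elements) is then satisfied by the pair $(r_i, r_{i+1})$, so that lemma yields the desired idempotent $e_i \in \langle G,t\rangle$ with $Xr_i\, e_i = Xr_{i+1}$, that is, $Y_i\, e_i = Y_{i+1}$. Composing,
\[
Xq_1\, e_1 e_2 \cdots e_{m-1} = Y_1 e_1 e_2 \cdots e_{m-1} = Y_2 e_2 \cdots e_{m-1} = \cdots = Y_m = Xq_2,
\]
and setting $j := m-1$ completes the argument.

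The whole proof is essentially chaining on top of the previous lemma, so I do not anticipate any real obstacle. The one small point worth articulating is that the action of the idempotent $e_i$ on the \emph{set} $Y_i$ depends only on $Y_i$ itself, not on the particular rank-$k$ map $r_i$ used to invoke the previous lemma; since the corollary only demands equality of image sets (not of maps), this observation is exactly what converts the single-step statement into the iterated one.
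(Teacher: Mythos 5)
Your proposal is correct and matches the paper's argument exactly: the paper likewise observes that any two $k$-sets are joined by a chain of $k$-sets with consecutive intersections of size $k-1$, and then applies the preceding lemma once per step of the chain. Your extra remark that the idempotent produced at each step depends only on the image sets (so the single-step lemma genuinely iterates) is a correct and worthwhile clarification of a point the paper leaves implicit.
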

 
So far we showed that it is possible to use idempotents to build maps with any given kernel of the same type as the kernel of $t$,  and as  image any $k$-set. Now we move a step forward.

\begin{lem}
Let $p$ be a map of rank $k$ and  $x,y\in Xp=\{p_1,\ldots,p_k\}$. Denote by $(xy)$ the transposition induced by $x$ and $y$. Then there exist idempotents $e_1,e_2,e_3\in \langle G,t\rangle$ such that $p(xy)=pe_1e_2e_3$. 
\end{lem}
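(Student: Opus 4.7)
The plan is to perform the transposition $(xy)$ on the image $Y := Xp$ via a ``scratch'' point $z \in X \setminus Y$, which exists because $t$ is singular ($k < n$). I will realize $(xy)$, viewed as a permutation of $Y$, as a three-step process: first move $x$ to $z$ while $y$ and every other $p_l$ stay put, then move $y$ into the vacated slot at $x$ while $z$ and the others stay put, and finally move $z$ back to $y$. On every other element $p_l \in Y$, all three steps are the identity.

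Concretely, I will choose idempotents $e_1, e_2, e_3 \in \langle G, t\rangle$ as follows. The idempotent $e_1$ has image $(Y\setminus\{x\})\cup\{z\}$ and a kernel in which $x$ and $z$ share a class, with $z$ as its representative; $e_2$ has image $(Y\setminus\{y\})\cup\{z\}$ and a kernel in which $x$ and $y$ share a class, with $x$ as its representative; and $e_3$ has image $Y$ and a kernel in which $y$ and $z$ share a class, with $y$ as its representative. Each remaining element of the intended image becomes the representative of its own kernel class, while the surplus points outside the image are distributed to produce the required block profile $(l_1,\ldots,l_k)$. Since $t$ is singular, $l_1 \geq 2$, so one can always place the forced pair inside the $l_1$-block; a short count confirms that the $n-k-1$ remaining ``extra'' points exactly fill the remaining block deficiencies $(l_1-2) + (l_2-1) + \cdots + (l_k-1)$.

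That such idempotents lie inside $\langle G, t\rangle$ now follows from Lemma \ref{lem1}, since we have produced the required kernel-type partition together with a prescribed transversal. The identity $pe_1e_2e_3 = p(xy)$ is then verified by tracing: for $u \in X$, one has $up \in Y$, and the three cases $up = x$, $up = y$, or $up = p_l \notin \{x,y\}$ travel along the paths $x \to z \to z \to y$, $y \to y \to x \to x$, and $p_l \to p_l \to p_l \to p_l$ respectively, each matching $u(p(xy))$. The only real obstacle is verifying that the prescribed kernel-image pairs can be realised with the exact block type of $t$, but this is the combinatorial content of the second paragraph and is essentially free from $l_1 \geq 2$.
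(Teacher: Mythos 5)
Your proof is correct and takes essentially the same route as the paper's: the paper likewise chooses a scratch point $c\in X\setminus Xp$ and builds three idempotents with exactly the kernel-class/representative pattern you describe ($x$ with $c$ mapping to $c$, then $y$ with $x$ mapping to $x$, then $c$ with $y$ mapping to $y$), citing Lemma \ref{lem1} for their existence in $\langle G,t\rangle$. Your explicit block-count showing the prescribed pair and transversal can be completed to a partition of the exact kernel type is simply a more detailed version of what the paper leaves implicit.
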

\begin{pf} Since $p$ is non-invertible, there exists $c\in X\setminus Xp$.
By Lemma \ref{lem1},  $\langle G,t\rangle$ intersects the following sets of idempotents:  
\[
\begin{array}{l}
A=([p_1],\ldots,[x,\underline{c}],\ldots,[y],\ldots,[p_k]) \\
B=([p_1],\ldots,[y,\underline{x}],\ldots,[c],\ldots,[p_k]) \\
C=([p_1],\ldots,[c,\underline{y}],\ldots,[x],\ldots,[p_k]). 
\end{array}
\]
Taking $e_1\in A$, $e_2\in B$, and $e_3\in C$, all from $\langle G,t\rangle$, we get the desired composition $p(xy)=pe_1e_2e_3$.  \qed
\end{pf}

Now we can prove Theorem \ref{main3.3}.
\begin{pf}
To prove the theorem, observe that $\langle t,S_n\rangle\setminus S_n$ is generated by the set $\{gth\mid g,h\in S_n\}$. If $gth\in \langle t,G\rangle$, for all $g,h\in S_n$, the result would follow. 

Let $Q$ be the partition induced by the kernel of $gth$ and let $S_1$ be a transversal for $Q$. Since $Q$ has the same kernel type as $\ker(t)$ it follows, by Lemma \ref{lem1}, that there exists an idempotent $e\in \langle G,t\rangle$ such that $Xe=S_1$ and the kernel of $e$ is $Q$. Let $S$ be a transversal for the kernel of $t$. By Corollary \ref{corol3.7}, there exists a sequence of idempotents such that $Xee_1\ldots e_i=S$. Thus the map $ee_1\ldots e_it$ has the same rank as  $t$, and the same kernel as $gph$. Similarly, there are idempotents $f_1,\ldots,f_l$ such that $Xee_1\ldots e_itf_1,\ldots,f_l=Xgth$. Thus, there exists a permutation $\sigma$ of the set $Xgth$ such that $ee_1\ldots e_itf_1,\ldots,f_l\sigma=gth$. Therefore, 
\[
Xgth=ee_1\ldots e_itf_1,\ldots,f_l\sigma=ee_1\ldots e_itf_1,\ldots,f_l(x_1y_1)\ldots (x_m y_m),
\]
and each of these transpositions can be replaced by a product of three idempotents of $\langle t,G\rangle$. { This also proves that  $\langle t,G\rangle \setminus G\subseteq \langle E,t\rangle$; as the converse inclusion is obvious, the 
result follows.} \qed
\end{pf}

We recall here some known facts about the semigroups $\langle t,S_n\rangle$. 

\begin{thm}\label{illustr}
Let $a\in T_n$ be singular and let $S=\langle a,S_n\rangle\setminus S_n$. Let $\Omega:=\{1,\ldots,n\}$.  Then 
\begin{enumerate}
\item\label{i1} $S= \{b\in \trans \mid (\exists g\in S_n)\ \ker(a)g\subseteq \ker(b)\}$;
\item\label{i2} $S$ is regular, that is, for all $a\in S$ there exists $b\in S$ such that $a=aba$;
\item\label{i3} $S$ is generated by its idempotents;
\item\label{i4} $S$ and $\langle g^{-1}ag \mid g\in S_n\rangle$ have the same idempotents;
\item\label{i5} $S=\langle g^{-1}ag \mid g\in S_n\rangle$;
\item\label{i7} the automorphisms of $\langle a,S_n\rangle$ are those induced under conjugation by the elements of the normalizer of $S$ in $\sym$, $$\mbox{Aut}(\langle a,S_n\rangle)\cong\mbox{N}_{\sym}(\langle a,S_n\rangle);$$ 
\item\label{i8}  we also have $\mbox{Aut}(\langle a,S_n\rangle)\cong S_n$;
\item\label{i9} all the congruences of $\langle a,S_n\rangle$ are described;
\item\label{i10} if $e^{2}=e\in \langle a,S_n\rangle$, $r:=\rank(e)$, then $$\{f\in \langle a,S_n\rangle \mid \ker(f)=\ker (e) \mbox{ and }\Omega f=\Omega e\}\cong {S}_{r}.$$
\item\label{i11} regarding principal ideals and Green's relations, for all $a,b\in S$, we have \[
\begin{array}{rcl}
aS=bS &\Leftrightarrow & \ker(a)=\ker(b)\\	
Sa=Sb &\Leftrightarrow & \Omega a=\Omega b\\	
SaS=SbS &\Leftrightarrow & \rank (a)=\rank( b )\\
\end{array}
\]
\item\label{i12} { the minimum size of a generating set for $\langle a,G\rangle$, for $a\in T_n\setminus S_n$, is $3$.} 
\item\label{i13} { the minimum size of a set $A$ of rank $k$ maps such that $\langle A,S_n\rangle$ generates all maps of rank at most $k$ is $p(k)$.} 
\end{enumerate}
\end{thm}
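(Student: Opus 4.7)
The plan is to establish the thirteen parts by appealing to the classical structure theory of $T_n$ and then combining with Theorem~\ref{main3.3} applied to the special case $G = S_n$. First I would treat the ``anatomy'' statements (\ref{i1})--(\ref{i5}) together. The description in (\ref{i1}) is essentially the well-known parametrisation of the two-sided ideal structure of $T_n$: a transformation $b$ can be written as $g_0 a g_1 \cdots a g_m$ with $g_i \in S_n$ iff the kernel of $b$ refines some $S_n$-translate of $\ker(a)$, because the kernel of a product $uv$ always refines the kernel of $u$. Regularity in (\ref{i2}) is immediate once one exhibits, for each $b \in S$, an idempotent $e$ with $\ker(e) = \ker(b)$ and $Xe \subseteq Xb$ supplying the von Neumann inverse; such $e$ exists by Lemma~\ref{lem1} (since $S_n$ has a single orbit on every partition type). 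Items (\ref{i3}), (\ref{i4}), (\ref{i5}) are then exactly the content of Theorem~\ref{main3.3} with $G = S_n$ (together with the trivial observation that conjugates of $a$ have the same kernel type, so the idempotent sets agree).

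Next, for the Green's relations in (\ref{i11}), I would note that once $S$ is regular, Green's lemma gives $a\mathrel{\mathcal{R}} b \iff aS^1 = bS^1$ and $a\mathrel{\mathcal{L}} b \iff S^1 a = S^1 b$, and these reduce in a regular subsemigroup of $T_n$ to equality of kernels (respectively images), because one can find an element of $S$ carrying any admissible kernel/image to any other; the $\mathcal{J}$-relation then collapses to equality of rank. Statement (\ref{i10}) follows since the $\mathcal{H}$-class of an idempotent is a group, and here that group acts faithfully on the $r$-element image, giving a subgroup of $S_r$; to see surjectivity onto $S_r$ one uses the action of the normaliser of the kernel-image pair, which contains all of $S_r$ because $S_n$ is the full group of units.

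For the automorphism statements (\ref{i7})--(\ref{i8}), the route is to show that any automorphism $\phi$ of $\langle a, S_n\rangle$ preserves the rank stratification (by (\ref{i11})) and the idempotents, so $\phi$ restricts to a permutation of the idempotents of rank $n-1$ (when they exist), which by a standard argument going back to Schreier--Mal'cev and extended by Levi~\cite{Le96} is induced by conjugation by a unique element of $S_n$; then (\ref{i7}) becomes tautological and (\ref{i8}) follows since $N_{S_n}(\langle a,S_n\rangle) = S_n$. The congruence description in (\ref{i9}) is the classical Mal'cev--Liber theorem adapted to this ideal, and I would just cite it. Finally, (\ref{i12}) is McAlister's result on $3$-generation~\cite{mcalister}, and (\ref{i13}) follows because the $S_n$-orbits on kernel partitions of a rank-$k$ map are in bijection with unordered partitions of $n$ with $k$ parts, and modulo rank $k$ one needs one generator per such kernel type; the count of relevant types (those with parts summing to $n$ in $k$ nonzero parts, once large parts are collapsed) is $p(k)$ because each type corresponds to a partition of $k$ via subtracting $1$ from each part, as in the proof of the $p(k)$-orbit theorem earlier. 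The main obstacle is organisational rather than mathematical: making sure the ``idempotent--generation'' claims used for (\ref{i7})--(\ref{i9}) are in place before invoking them, which is why I would prove (\ref{i1})--(\ref{i5}) and (\ref{i11}) first.
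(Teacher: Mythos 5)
The paper's own proof of this theorem is essentially a list of citations (Symons \cite{symo} for (1), Levi--McFadden \cite{lm} for (2), (3), (5), (11), McAlister \cite{mcalister} for (4), Sullivan \cite{Sullivan} for (7), Levi for (9)), so your attempt to rederive the statements from the machinery of Section \ref{sem1} is a genuinely different route --- but it has a real gap at (3)--(5). Theorem \ref{main3.3} with $G=S_n$ gives $S=\langle E,t\rangle$, \emph{not} $S=\langle E\rangle$: the factorisation produced in its proof still has a copy of $t$ sitting in the middle of the word, so idempotent-generation (item (3)) needs the extra step of expressing $t$ itself as a product of idempotents of $S$ (this can be done with Lemma \ref{lem1}, Corollary \ref{corol3.7} and the transposition lemma, but you must actually do it). More seriously, item (4) is McAlister's theorem and is not the ``trivial observation that conjugates of $a$ have the same kernel type'': that observation yields only the easy inclusion of the idempotents of $\langle g^{-1}ag\mid g\in S_n\rangle$ into those of $S$, whereas the content of (4) is the converse --- every idempotent of $S$, including those of rank strictly smaller than $\operatorname{rank}(a)$, is a product of $S_n$-conjugates of $a$. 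Since (5) is obtained from (3) and (4) (via $S=\langle E(S)\rangle=\langle E(\langle g^{-1}ag\rangle)\rangle\subseteq\langle g^{-1}ag\mid g\in S_n\rangle$), it inherits this gap.

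Two further points. Your route to (7)--(8) through the rank-$(n-1)$ idempotents collapses whenever $\operatorname{rank}(a)<n-1$, because $S$ then contains no idempotents of that rank; the argument that works for arbitrary singular $a$ (and the one the paper uses) is that $S$ contains all constant maps by (1), so Sullivan's theorem \cite{Sullivan} applies directly, and (8) follows because $S$ is $S_n$-invariant. Minor slips: in (1) your use of ``refines'' is reversed --- since $\ker(u)\subseteq\ker(uv)$, it is a translate of $\ker(a)$ that refines $\ker(b)$, not the other way around; (12) is not McAlister's result but simply ``$S_n$ is $2$-generated, plus one singular generator is necessary and sufficient''; and in (13) subtracting one from each part of a $k$-part partition of $n$ produces a partition of $n-k$, not of $k$ (the paper itself is loose on this point, but the count of kernel types of rank-$k$ maps with $k\ge n/2$ is $p(n-k)$).
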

\begin{pf}
 Equality (\ref{i1}) was proved by Symons in \cite{symo}. Claims (\ref{i2}), (\ref{i3}) and (\ref{i5}) were proved by Levi and McFadden in \cite{lm}. Claim (\ref{i4}) was proved by McAlister in \cite{mcalister}, and (together with (\ref{i3})) it also implies (\ref{i5}).

 Claim (\ref{i7}) follows from the general result that every automorphism of a  semigroup $S\le T_n$ containing all the constants is induced under conjugation by the normalizer of $S$ in $S_n$  (see \cite{Sullivan} and also \cite{arko,arko2}); since, by (\ref{i1}), the semigroups $\langle S_n,a\rangle$ contain all the constants, the result follows. 
 Claim (\ref{i8}) was proved by Symons in \cite{symo}, but is also an easy consequence from (\ref{i7}).
In \cite{levi00} Levi  described all the congruences of an $S_n$-normal semigroup  and hence described the congruences in $S$. Thus (\ref{i9}).

Statement  (\ref{i10}) belongs to the folklore (see Theorem 5.1.4 of \cite{mar}).  
The results about principal ideals  (\ref{i11}) were proved by Levi and McFadden in \cite{lm}.  

{
Claim (\ref{i12}) follows from the fact that $S_n$ is generated by two elements. 

Regarding (\ref{i13}), observe that given any rank $k$ map $t$ we have that all rank $k$ maps in $\langle t,G\rangle$ have the same kernel type as $t$; conversely, every rank $k$ map of the same kernel type of $p$ belongs to $\langle t,G\rangle$. Therefore, to generate all maps of rank $k$ a necessary and sufficient condition is that there is in $A$  one map of each kernel type, so that $|A|= p(n)$. It is well known that the maps of rank $k$, for $k>1$, generate all maps of smaller ranks. The result follows. }
\qed
\end{pf}

The previous results immediately imply the following.

\begin{thm}
Let $t$ be a singular map in $T_n$, the full transformation monoid on $\Omega:=\{1,\ldots ,n\}$,  and suppose that $t$ has kernel type $(l_1,\ldots,l_k)$, with $k\ge n/2$; let $G$ be a group having only one orbit in the partitions of that type. Let $S=\langle t,G\rangle\setminus G$. Then
\begin{enumerate}
\item\label{i1b} $S= \{b\in \trans \mid (\exists g\in S_n)\ \ker(a)g\subseteq \ker(b)\}$;
\item\label{i2b} $S$ is regular, that is, for all $a\in S$ there exists $b\in S$ such that $a=aba$;
\item\label{i3b} $S$ is generated by its idempotents;
\item\label{i4b} $S$ and $\langle g^{-1}ag \mid g\in G\rangle$ have the same idempotents;
\item\label{i5b} $S=\langle g^{-1}ag \mid g\in G\rangle$;
\item\label{i7b} the automorphisms of $\langle a,G\rangle$ are those induced under conjugation by the elements of the normalizer of $G$ in $S_n$, $$\mbox{Aut}(\langle a,G\rangle)\cong\mbox{N}_{S_n}(G);$$ 
\item\label{i9b} all the congruences of $S$ are described;
\item\label{i10b} if $e^{2}=e\in \langle a,G\rangle$, $r:=\rank(e)$, then $$\{f\in \langle a,G\rangle \mid \ker(f)=\ker (e) \mbox{ and }\Omega f=\Omega e\}\cong \mathcal{S}_{r}.$$
\item\label{i11b} regarding principal ideals and Green's relations, for all $a,b\in S$, we have \[
\begin{array}{rcl}
aS=bS &\Leftrightarrow & \ker(a)=\ker(b)\\	
Sa=Sb &\Leftrightarrow & \Omega a=\Omega b\\	
SaS=SbS &\Leftrightarrow & \rank (a)=\rank( b )\\
\end{array}
\]
\item\label{i12b} the minimum size of a generating set for $\langle a,G\rangle$ is $3$.
\item\label{i13b}   {let $A$ be a set of rank $k$ maps such that $\langle A,G\rangle$ generates all maps of rank at most $k$ and $A$ has minimum size among the sets with that property. } 
{A bound for the size of the sets $A\subseteq T_n$ such that $\langle G,A\rangle$ generate all maps of rank $k$ is given in Table~\ref{t:genset2}. In the
middle column is the type on which $G$ has only one orbit.}
\end{enumerate}
\end{thm}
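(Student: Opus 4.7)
\medskip

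\noindent\textbf{Proof plan.} The backbone of the argument is Theorem~\ref{main3.3}, which gives the identification $S=\langle t,G\rangle\setminus G=\langle t,S_n\rangle\setminus S_n$. Once this is in hand, items (\ref{i1b}), (\ref{i2b}), (\ref{i3b}), (\ref{i9b}), (\ref{i10b}), (\ref{i11b}) are immediate translations of the corresponding parts of Theorem~\ref{illustr}, since $S$ is literally the semigroup studied there. For items (\ref{i4b}) and (\ref{i5b}), I would note that a product $g_1^{-1}tg_1\cdots g_m^{-1}tg_m$ can be rewritten as $g_1^{-1}\,t\,h_1\,t\,h_2\cdots t\,h_{m-1}\,t\,g_m$ with $h_i\in G$, so $\langle g^{-1}tg:g\in G\rangle$ contains the conjugates $g^{-1}(tk)^\omega g$ for all $g,k\in G$. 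Re-reading the proof of Lemma~\ref{lem1}, every idempotent of $S$ has exactly this form, so all idempotents of $S$ lie in $\langle g^{-1}tg:g\in G\rangle$. Combined with (\ref{i3b}) (idempotent generation), this forces $S=\langle g^{-1}tg:g\in G\rangle$, yielding both (\ref{i4b}) and (\ref{i5b}).

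For item (\ref{i7b}) I would apply Sullivan's theorem, invoked already in Theorem~\ref{illustr}(\ref{i7}), to obtain $\Aut(\langle t,G\rangle)\cong N_{S_n}(\langle t,G\rangle)$, using that $\langle t,G\rangle$ contains every constant map by (\ref{i1b}). To identify this normaliser with $N_{S_n}(G)$, observe that any $\sigma\in S_n$ normalising $\langle t,G\rangle$ must normalise its group of units, which equals $G$ because $t$ is singular and $S$ consists entirely of non-invertible maps; conversely, for $\sigma\in N_{S_n}(G)$ one has $\sigma^{-1}G\sigma=G$, while $\sigma^{-1}S\sigma=S$ follows from the fact that $S=\langle t,S_n\rangle\setminus S_n$ is invariant under all of $S_n$ acting by conjugation, so $\sigma$ normalises $\langle t,G\rangle=G\sqcup S$.

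For item (\ref{i12b}), the assumption that $G$ has a single orbit on partitions of type $(l_1,\ldots,l_k)$ forces $G$ to be $(n-k+1)$-homogeneous, hence (as $t$ is singular and so $k\le n-1$) at least $2$-homogeneous; Corollary~\ref{2hom} then gives $d(G)=2$, so $t$ together with two generators of $G$ yields a set of three generators for $\langle t,G\rangle$. For the lower bound, in any semigroup of transformations the units are permutations and any word containing a singular letter is singular, so the units of $\langle x_1,\ldots,x_\ell\rangle$ are generated by those $x_i$ that happen to be permutations; to obtain the non-cyclic group $G$ inside the units one needs at least $d(G)=2$ permutation generators, and a further singular generator is required to produce $S$, giving the bound $\ge 3$.

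Finally, for item (\ref{i13b}) and the bounds collected in Table~\ref{t:genset2}, I would argue that $A$ must contain a representative of each kernel type of rank $k$ not obtainable by multiplying rank-$k$ elements of other kernel types, and within each kernel type enough representatives to cover the $G$-orbits on (kernel, image) pairs; the counts then come from the orbit estimates of Section~\ref{groups}, namely the $p(k)$ kernel types together with the refined bounds of Theorem~\ref{2.7} and the $3$-homogeneous tables for the delicate low-rank cases. The main obstacle is precisely this last step: controlling $|A|$ sharply is more subtle than just counting $G$-orbits on $(n-k)$-partitions, because a product of rank-$k$ maps can itself have rank $k$ with a new kernel type, so one must show that the orbit-counting upper bound is genuinely attained. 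This requires a case split by kernel type, using the $k$-homogeneity of $G$ to handle images and the orbit enumerations of Section~\ref{groups} to handle kernels, exactly as in the analogous argument behind Table~\ref{t:gen}.
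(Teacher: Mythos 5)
Your overall strategy coincides with the paper's: identify $S$ with $\langle t,S_n\rangle\setminus S_n$ via Theorem~\ref{main3.3}, pull items (\ref{i1b})--(\ref{i5b}) and (\ref{i9b})--(\ref{i11b}) back from Theorem~\ref{illustr}, combine Sullivan's theorem with the $S_n$-invariance of $\langle t,S_n\rangle\setminus S_n$ for (\ref{i7b}), invoke Corollary~\ref{2hom} for (\ref{i12b}), and reduce (\ref{i13b}) to the orbit counts of Section~\ref{groups}. The extra detail you supply for (\ref{i4b})/(\ref{i5b}) and the lower bound in (\ref{i12b}) is correct and more explicit than the paper, which merely asserts these points. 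One small omission in (\ref{i7b}): to get an isomorphism $N_{S_n}(G)\cong\Aut(\langle t,G\rangle)$, rather than just a surjection, you must check that conjugation acts faithfully; the paper does this by noting that primitive groups have trivial centre (alternatively, only the identity permutation fixes every constant map, and the constants lie in $S$ by (\ref{i1b})).

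The one genuinely wrong assertion is in your treatment of (\ref{i13b}): a product of rank-$k$ maps that itself has rank $k$ can \emph{never} have a new kernel type. Indeed, write a rank-$k$ element of $\langle A,G\rangle$ as $s=u\,a\,v$ where $u\in G$ is the product of the permutations preceding the first singular factor $a\in A$. Then $\ker(s)$ coarsens $\ker(ua)$, and both partitions have exactly $k$ parts since $\rank(s)=\rank(ua)=k$; hence $\ker(s)=\ker(ua)$, which is the image of $\ker(a)$ under $u^{-1}$ and so lies in the $G$-orbit of $\ker(a)$. This is precisely why the number of $G$-orbits on $k$-part partitions is a genuine \emph{lower} bound for $|A|$ --- the opposite of the danger you describe. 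What remains is the sufficiency direction (one representative per kernel-orbit, together with $G$-homogeneity on images, generates everything of rank at most $k$), which you correctly gesture at, plus the ``little care'' the paper spells out: for instance, a group with one orbit on $(3,2,1,\ldots,1)$-partitions is $5$-homogeneous, hence symmetric, alternating or Mathieu, and the entries of Table~\ref{t:genset2} are read off from the worst cases in Table~\ref{t:3hom2} and its companions. With the false sentence deleted and the faithfulness remark added, your argument is the paper's argument.
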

{
\begin{table}[htbp]
\[
\begin{array}{|c|c|c|}		
\hline
\hbox{Rank} & \hbox{Kernel type} & |A|\\
\hline \hline 
n-1 & (2,1,\ldots,1) & 1 \\ 
n-2 & (2,2,1,\ldots,1) & 2 \\ 
    & (3,1,\ldots,1) & O(n) \\
\hline 
n-3 & (4,1,\ldots,1) & 144 \\
    & (3,2,1,\ldots,1) & 5  \\
    & (2,2,2,1,\ldots,1) & 3 \\ 
\hline
n-4 & (5,1,\ldots,1) & 15 \\
    & \hbox{other} & 5 \\
\hline    
k\ (n/2\le k\le n-5) & \hbox{any} & p(k) \\
\hline 
\end{array}
\]
\caption{\label{t:genset2}Generating all maps of rank $k$}
\end{table}
}
\begin{pf}
Claims (\ref{i1b})--(\ref{i5b}) and (\ref{i9b})--(\ref{i11b}) all follow from the previous theorem and Theorem \ref{main3.3}.

Regarding claim (\ref{i7b}), observe that by (\ref{i1b}) the semigroup $\langle a,G\rangle$ contains all the constant maps and hence, by \cite[Theorem 1]{Sullivan}, its automorphisms are 
\[
\{\tau^g \mid g\in S_n \wedge g^{-1}\langle a,G\rangle g =\langle a,G\rangle\}, 
\]where, for a given $g\in S_n$, we have $\tau^g:\langle a,G\rangle\rightarrow \langle a,G\rangle$ defined by $f\tau^g=g^{-1}fg$. Note that a permutation $g\in N_{S_n}(G)$ normalizes $\langle a,G\rangle$ if and only if  it normalizes $G$ and  $\langle a,G\rangle\setminus G$. Thus  the automorphisms of $\langle a,G\rangle$ are the maps induced under conjugation by the elements in the normalizer $N_{S_n}(G)$ that also normalize $\langle a,G\rangle\setminus G$.  Since  $\langle a,G\rangle\setminus G=\langle a,S_n\rangle\setminus S_n$ it follows that every permutation of $S_n$ normalizes $\langle a,G\rangle\setminus G$. We conclude that the automorphisms of $\langle a,G\rangle$ are all the maps 
\[
\{\tau^g:\langle a,G\rangle \rightarrow \langle a,G\rangle \mid g\in N_{S_n}(G)\}.
\]

To prove that in fact we have $\Aut(\langle a,G\rangle\setminus G)\cong N_{S_n}(G)$ we only need to observe that primitive groups have trivial center (that is, only the identity  in $G$ commutes with all other elements of $G$).

Regarding (\ref{i12b}), observe that every two homogeneous group is $2$-generated (Corollary \ref{2hom}).

Finally, (\ref{i13b}), follows from the results in the previous section, with a little care. For example, a permutation group transitive on partitions of type 
$(2,2,1,\ldots,1)$ is $4$-homogeneous, and so $3$-homogeneous; so
it is transitive on $(3,1,\ldots,1)$ partitions also. A group transitive on
$(3,2,1,\ldots,1)$ partitions is $5$-homogeneous, and so symmetric, alternating
or a Mathieu group; we refer to Table~\ref{t:3hom2} for the Mathieu groups 
(which are transitive on partitions of this type because they are $5$-transitive).
\qed 
\end{pf}

%

\section{Groups with only one orbit on the image}\label{sem2}

We turn now to semigroups $\langle t,G\rangle$, where $G$ is transitive on
the image of $t$ (that is, $G$ is $(n-k)$-homogeneous, where $k\ge n/2$ is the
rank of $t$). 

\begin{thm}\label{lastmain}
Let $G$ be a primitive group with just one orbit on $(n-k)$-sets, where
$1\le k\le n/2$. Let $t\in T_n$ be a map of rank $n-k$. Then
\begin{enumerate}
\item\label{i1c} $\langle G,t\rangle\setminus G$ and $\langle g^{-1}tg \mid g\in G\rangle$ have the same idempotents;
\item\label{i2c} $\Aut(\langle G,t\rangle)\cong N_{S_n}(\langle G,t\rangle)$. 
\item\label{i2cb)} For $k\ge3$, the list of $3$-homogeneous groups that satisfy $$\Aut(\langle G,t\rangle)\cong N_{S_n}(G)$$ is the following: 
\begin{itemize}\itemsep0pt
\item $G=N_{S_n}(G)$, that is, 
\begin{itemize}\itemsep0pt
\item[(i)] $S_n$.
\item[(ii)] $\pgaml(2,q)$ for $k=3$.
\item[(iii)] $\agl(d,2)$ for $k=3$.
\item[(iv)] $\agaml(1,8)$,
$M_{11}$ ($k=4$), $M_{11}$ (degree~$12$, $k=3$), $M_{12}$ ($k=5$), $2^4:A_7$, $M_{22}:2$ ($k=3$),
$M_{23}$ ($k=4$), $M_{24}$ ($k=5$), and $\agaml(1,32)$ ($k=4$).
\end{itemize}
\item $G=A_n$;
\item $G=\agl(1,8)$, $\pgl(2,8)$, $\pgl(2,9)$, $M_{10}$, $\psl(2,11)$,
$M_{22}$, $\pxl(2,25)$, or $\pxl(2,49)$, with $k=3$, $\lambda=(4,1,\ldots,1)$.
\end{itemize}
The list is complete with the possible exception of the groups $\pxl(2,q)$ for $q\ge169$. 
\item\label{i3c} Let $A\subseteq T_n$ be a set of rank $k$ maps such that $\langle A,G\rangle$ generates all maps of rank at most $k$ and $A$ has minimum size among the subsets of $T_n$ with that property. Then the maximum sizes that  $A$ can have is 
given in Table~\ref{t:gens3}. 
\end{enumerate}
\end{thm}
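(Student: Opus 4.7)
The plan is to prove the four parts in order, with part~(1) providing the structural backbone needed by the remaining parts. Throughout, the key feature of the setting is that $G$ is transitive on $(n-k)$-sets, so every $(n-k)$-subset of $\{1,\ldots,n\}$ occurs as the image of some element of $\langle G,t\rangle$; this plays the role that ``transitivity on kernel types'' played in Section~\ref{sem1}.

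For part~(1) I would mirror the strategy of Theorem~\ref{main3.3}. The inclusion $\langle g^{-1}tg\mid g\in G\rangle\subseteq \langle G,t\rangle$ is trivial, so the content is showing that every idempotent of $\langle G,t\rangle\setminus G$ lies in the conjugate-generated subsemigroup. First I would show that, for any $(n-k)$-set $Y$ and any partition $P$ of $\{1,\ldots,n\}$ of the same type as $\ker t$ for which $Y$ is a transversal, there exists an idempotent with image $Y$ and kernel $P$ inside $\langle g^{-1}tg\mid g\in G\rangle$: pick $g_1,g_2\in G$ with $\ker(g_1^{-1}tg_2)=P$ and $Xg_1^{-1}tg_2=Y$ using $(n-k)$-homogeneity; then the standard finite-semigroup trick (take an idempotent power of $tg_2g_1^{-1}$ and conjugate back) produces the required idempotent. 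The same bridging-by-idempotents argument from Section~\ref{sem1} then promotes this to the full statement.

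For part~(2), once one has part~(1) the semigroup $\langle G,t\rangle\setminus G$ is generated by conjugates of $t$, so it is $G$-normal. The inclusion $N_{S_n}(\langle G,t\rangle)\hookrightarrow \Aut(\langle G,t\rangle)$ is via conjugation, and the reverse inclusion follows from the generalisation of Sullivan's theorem \cite{Sullivan} given in \cite{arko,arko2}, provided the semigroup separates points of $\{1,\ldots,n\}$. The latter is automatic: since $G$ is primitive and acts faithfully on $\{1,\ldots,n\}$, and $t$ is singular, any permutation commuting with every element of $\langle G,t\rangle$ must centralise $G$ and hence (by primitivity) be trivial.

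For part~(3), I would combine part~(2) with Theorem~\ref{t:norm}. We have the chain
\[
N_{S_n}(G)\leq N_{S_n}(\langle G,t\rangle)\cong \Aut(\langle G,t\rangle),
\]
so the question is when $N_{S_n}(G)=N_{S_n}(\langle G,t\rangle)$. A permutation $\sigma\in N_{S_n}(G)$ normalises $\langle G,t\rangle$ iff it permutes the rank-$(n-k)$ elements of $\langle G,t\rangle\setminus G$; since these elements are parametrised (modulo $G$ acting by right and left multiplication) by pairs (image, kernel) with fixed image orbit, and $G$ is transitive on $(n-k)$-sets, this is equivalent to $\sigma$ fixing every $G$-orbit on $(n-k)$-partitions of the relevant type. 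That is exactly the ``closed pair'' condition of Section~\ref{orbs and norms}, so the classification in Theorem~\ref{t:norm} yields precisely the listed groups (and the unresolved family $\pxl(2,q)$, $q\geq 169$).

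For part~(4), a set $A$ of rank-$(n-k)$ maps generates all rank-$(n-k)$ maps together with $G$ iff it meets every $G\times G$-orbit on rank-$(n-k)$ transformations (left action by $G$ hits all images, right action hits all choices of transversal; hence only the $G$-orbit on kernel partitions of type matching $\ker t$ matters, up to the action on transversals). Products of rank-$(n-k)$ maps then generate the lower-rank elements automatically. Consequently $|A|$ is bounded by the number of $G$-orbits on $(n-k)$-partitions, and reading off the orbit counts from Tables~\ref{t:3hom}, \ref{t:3hom2}, \ref{t:4hom}, \ref{t:5hom} and the polynomial estimates of Theorem~\ref{2.7} produces the entries in Table~\ref{t:gens3}.

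The main obstacle will be part~(3): one must carefully separate normalising $G$ from normalising the whole semigroup, and verify that the second condition is genuinely equivalent to the ``closed'' condition analysed in Section~\ref{orbs and norms}. Once this reduction is made, the classification is handed to us by Theorem~\ref{t:norm}, but the reduction itself requires care about how $N_{S_n}(G)$ acts on the $G$-orbits of rank-$(n-k)$ elements, and in particular on their kernels.
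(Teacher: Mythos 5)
There are two genuine gaps, both in the parts you treat as routine. For part~(1), your plan to mirror Theorem~\ref{main3.3} breaks down at the first step: in this theorem $G$ is only assumed transitive on $(n-k)$-sets (images), \emph{not} on partitions of the kernel type, so you cannot choose $g_1,g_2\in G$ realising an \emph{arbitrary} partition $P$ of type $\ker t$ as $\ker(g_1^{-1}tg_2)$. Indeed, if $G$ has several orbits on partitions of that type (e.g.\ a $3$-homogeneous group with many orbits on $(2,2,2,1,\ldots,1)$-partitions), then $\langle G,t\rangle$ contains no idempotent whose kernel lies outside the $G$-orbit of $\ker t$, so the lemma you propose is false as stated. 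The paper does not argue this way at all: it simply cites McAlister's theorem from \cite{mcalister}, which asserts for \emph{arbitrary} $G\le S_n$ and $a\in T_n$ that $\langle a,G\rangle\setminus G$ and $\langle g^{-1}ag\mid g\in G\rangle$ have the same idempotents. (Relatedly, your later claim that part~(1) implies the semigroup is ``generated by conjugates of $t$'' is a non sequitur: having the same idempotents is much weaker than being generated by them.)

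For part~(2), the hypothesis you need in order to invoke Sullivan's theorem and its descendants \cite{Sullivan,arko,arko2} is that the semigroup contains \emph{all the constant maps}, not that it ``separates points''; your centralizer argument shows the conjugation action is faithful but does not show every automorphism is induced by conjugation. The missing ingredient is synchronization: primitive groups synchronize every singular map of rank at least $n-4$ (\cite{ABCRS,ArCa14,rystsov}), and $2$-homogeneous groups synchronize every singular map (\cite{ArCaSt15,mcalister}); since one orbit on $(n-k)$-sets with $n-k\ge n/2$ forces $2$-homogeneity when $k\ge2$, and $k\le4$ covers the remaining case $k=1$, the semigroup $\langle G,t\rangle$ always contains all constants, and only then does the automorphism theorem apply. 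Parts~(3) and~(4) of your proposal are essentially the paper's argument (the paper in fact just refers part~(3) to Theorem~\ref{main3}, and your reduction to the ``closed pair'' condition is the intended one), though note the displayed inclusion should read $N_{S_n}(\langle G,t\rangle)\le N_{S_n}(G)$ --- a permutation normalising the semigroup normalises its group of units, not conversely --- even though your subsequent prose treats the containment correctly.
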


{
\begin{table}[htbp]
\[\begin{array}{|c|c|c|c|}
\hline
\pbox{5cm}{\mbox{\tiny{Rank}} \\ \text{\tiny{$n-k$}} }& { |A|} &\pbox{5cm}{\   \\ {\tiny Sample $k$-homogeneous groups}\\  {\tiny  attaining the bound for $|A|$} \\ }&\pbox{5cm}{{\tiny Minimum number of} \\ {\tiny generators for a primitive} \\ {\tiny $k$-homogeneous group}  }\\
\hline\hline
\text{\tiny{$n-1$}} & \frac{(n-1)}{2} &C_p,D_{p} \ (\mbox{{\tiny $n$ odd prime}})  & \frac{C\log n}{\sqrt{\log \log n}} \\
\text{\tiny{$n-2$}} & O(n^2) &\mbox{Example \ref{expl2.1}} &2\\
\text{\tiny{$n-3$}} & O(n^3) &\mathrm{PSL}(2,q),\mathrm{P}\Gamma\mathrm{L}(2,q) &2\\
\text{\tiny{$n-4$}} & 12160 &\mathrm{P}\Gamma\mathrm{L}(2,32)  \ (\mbox{{\tiny $n=33$}}) &2\\
\text{\tiny{$n-5$}} & 77&M_{24} \ (\mbox{{\tiny $n=24$}})&2\\
\text{\tiny{$n-k$ $(k\ge5)$}} & p(k)&S_n,A_n &2 \\
\hline
\end{array}\]
\caption{\label{t:gens3} { Worst case scenario of the smallest number of rank $n-k$ maps needed to together with a $k$-homogeneous group $G$ generate all the maps of rank at most $n-k$.}}
\end{table}
}
For more precise values depending on the group chosen, see the tables in 
Section \ref{groups} of the paper.
 
\begin{proof}
McAlister \cite{mcalister} proved that for any group $G\le S_n$ and any transformation $a\in T_n$, the semigroups $\langle a,G\rangle\setminus G$ and $\langle  g^{-1}ag\mid g\in G\rangle$ have the same idempotents. This proves (\ref{i1c}).

A transitive group $G$ is said to synchronize a map $t$ if the semigroup $\langle G,t\rangle$ contains  a constant map (and hence, by transitivity, all constant maps). It is proved that primitive groups  synchronize every singular map of rank at least $n-4$ (see \cite{ABCRS,ArCa14,rystsov}). It is also known that $2$-homogeneous groups, together with any singular map, generate all the constant maps (\cite{ArCaSt15,mcalister}). Therefore, under the assumptions of the theorem, if the primitive group $G$ has only one orbit on the $k$-sets, for $n>k\ge n/2$, then $G$ together with any rank $k$ map $t$ generates  all the constants and hence the automorphisms of $S:=\langle t,G\rangle$ are induced under conjugation by the elements in $N_{S_n}(S)$. This implies (\ref{i2c}). 

The more detailed description included in  (\ref{i2cb)}) follows from Theorem \ref{main3}. 

Regarding (\ref{i3c}), we start by observing  that all maps in $\langle G,t\rangle$ having the same rank of $t$, have also the same kernel type of $t$. Therefore, to generate all rank $k$ maps with $G$ and a set $A$ of rank $k$ maps, $A$ must contain maps whose kernels form a transversal  of the orbits of $G$ on each kernel type. This necessary condition turns out to be sufficient for $\langle G,A\rangle$ to generate all transformations of rank at most $k$. In fact,  given any $k$-partition $P$ and any transversal $S$ for $P$, there exists $p\in A$ and $g\in G$ such that $P=\ker(gp)$. In addition, since  $G$ has only one orbit on the $k$-sets, it follows that there exists $h\in G$ such that the image of $gph$ is $S$. {Therefore,  we infer that $\rank(phg)=\rank(p)$; thus, every element in $\langle phg\rangle$ has the same rank of $p$ and, for some natural number $\omega$,  we have that $(phg)^\omega$ is idempotent and the same holds for $e:=g(phg)^\omega g^{-1}$. In addition, $\ker(e)=P$ and the image of $e$ coincides with the image of $ph$ which is $S$.  Since $P$ and $S$ were arbitrary, it follows that $\langle A,G\rangle$ contains all rank $k$ idempotents of $T_n$. It is well known (\cite{ar2000}) that the rank $k$ idempotents generate all maps of rank at most $k$ and hence the result follows.} 
\end{proof}

\section{On normalizers of $2$-homogeneous groups}
\label{norm}

{By the main theorems of the two previous sections, to compute the automorphisms of $\langle G,t\rangle$ (with $G$ and $t$ under the assumptions of the theorems) it is necessary to know the normalizer of $\langle G,t\rangle$ in $S_n$, which is contained in the normalizer of $G$. Therefore we provide here the normalizers of $2$-homogeneous groups.} 
 
According to a theorem of Burnside, a $2$-transitive group $G$ has a unique
minimal normal subgroup $T$, which is either elementary abelian or 
simple non-abelian. (If $G$ is $2$-homogeneous but not $2$-transitive, it
also has a unique minimal normal subgroup, which is elementary abelian.)
Thus $N_{S_n}(G)\le N_{S_n}(T)$. So, to describe the normalizers of the 
$2$-homogeneous groups $G$, we only need to look within the group
$N_{S_n}(T)/T$. Table~\ref{t:norms} gives the structure of this quotient in the
case when $T$ is simple. In the table, $G(r,s,p)$ denotes the group
$\langle a,b\mid a^r=b^s=1,b^{-1}ab=a^p\rangle$. {In all rows of the table except the second and fourth, $N_{S_n}(G)=N_{S_n}(T)$. In the second and fourth rows, we have $N_{S_n}(G)/T \cong N_{N(T)/T}(G/T)$, and this quotient is computed in the metacyclic group $G(r,s,p)$.}

\begin{table}[htbp]
\begin{center}
\begin{tabular}{|c|c|c|c|}
\hline
$T$ & Degree & $N(T)/T$ & Condition \\
\hline\hline\small
$A_n$ & $n$ & $C_2$ & \\
$\mathrm{PSL}(d,q)$ & $(q^d-1)/(q-1)$ &
$G(r,s,p)$
& $q=p^s$, $p$ prime,\\
& & & $r=\gcd(q-1,d)$ \\
$\mathrm{Sp}(2d,2)$ & $2^{2d-1}\pm 2^{d-1}$ & 1 & \\
$\mathrm{PSU}(3,q)$ & $q^3-1$ &
$G(r,s,p)$
& $q=p^s$, $p$ prime,\\
& & &  $r=\gcd(q+1,3)$ \\
$\mathrm{Sz}(q)$ & $q^2+1$ & $C_{2e+1}$ & $q=2^{2e+1}$ \\
$R_1(q)$ & $q^3+1$ & $C_{2e+1}$ & $q=3^{2e+1}$ \\
$M_{11}$ & $11$ & $1$ & \\
$M_{11}$ & $12$ & $1$ & \\
$M_{12}$ & $12$ & $1$ & \\
$A_7$ & $15$ & $1$ & \\
$M_{22}$ & $22$ & $C_2$ & \\
$M_{23}$ & $23$ & $1$ & \\
$M_{24}$ & $24$ & $1$ & \\
$\mathrm{HS}$ & $176$ & $1$ & \\
$\mathrm{Co}_3$ & $276$ & $1$ & \\
\hline
\end{tabular}
\end{center}
\caption{\label{t:norms}Normalizers of almost simple $2$-transitive groups}
\end{table}

Note that there are a few small exceptions: $\mathrm{PSL}(2,2)$, 
$\mathrm{PSL}(2,3)$, $\mathrm{PSU}(3,2)$, $\mathrm{Sz}(2)$, $\mathrm{Sp}(4,2)$,
and $R_1(3)$ are not simple. The first four of these are solvable; the fifth
has a simple subgroup of index $2$ isomorphic to $A_6$; and the last
has a simple subgroup of index~$3$ isomorphic to $\mathrm{PSL}(2,8)$.

Now we consider the $2$-homogeneous affine groups.
In each case, the classification gives a subgroup $H$ (not necessarily
$2$-homogeneous) which must be contained in $G$. The group $H$ contains
the translation group $T$ of $G$, so $G=TG_0$ and $H=TH_0$. Thus, as in the
other case, we have $N_{S_n}(G)\le N_{S_n}(H)$, so again we have to compute
the normalizer within the group $N_{S_n}(H)/H\cong N_{S_{n-1}}(H_0)/H_0$.
Table \ref{t:aff_norm} gives the structure of this quotient group. The groups
$G(r,s,p)$ are the same as defined earlier. In all cases not shown in the
table, the quotient is abelian, and so the normalizers of $H$ and $T$ coincide,
and we have not listed them explicitly.

\begin{table}[htbp]
\begin{center}
\begin{tabular}{|c|c|c|c|}
\hline
Degree & $H_0$ & $N(H_0)/H_0$ & Condition \\
\hline\hline
$q^n$ & $\mathrm{SL}(n,q)$ & $G(r,s,p)$ & $q=p^s$, $r=q-1$ \\
$q$ & $C_{(q-1)/2}$ & $C_{2s}$ & $q=p^s$ odd \\
$q^{2n}$ & $\mathrm{Sp}(2n,q)$ & $G(r,s,p)$ & $q=p^s$, $r=q-1$ \\
\hline
\end{tabular}
\end{center}
\caption{\label{t:aff_norm}Normalizers of affine groups}
\end{table}

We have not attempted to make a similar classification of normalizers of
primitive groups, since this problem is as difficult as finding normalizers
of arbitrary transitive groups, as the following example shows.

Let $m\ge3$, and let $K$ be a transitive group of degree $k$. Let
$G$ be the wreath product $S_m\wr K$ in its power action of degree $m^k$.
Then $G$ is primitive, and its normalizer in the symmetric group of degree
$m^k$ is $S_m\wr N_{S_k}(K)$.
 
 \section{Problems}
If the following question has an affirmative answer (as we conjecture), then the list in Theorem \ref{main3} is complete. 
 \begin{problem}
 Is it true that for $G=\pxl(2,q)$, $q\ge 169$ and $\lambda=(4,1,\ldots,1)$, no pair $(G,\lambda)$ is closed?  
 \end{problem}

The next problem looks within reachable boundaries. 
 
 \begin{problem}
 Prove for $2$-homogeneous groups an analogous of Theorem \ref{main3}.
 \end{problem}
 
Unlike the previous, the next problem is certainly extremely difficult. 

 \begin{problem}
 Prove for primitive groups an analogous of Theorem \ref{main3}.
 \end{problem}
 
The next problem was introduced in Section \ref{orbs and norms}, but only some remarks were then done. A full solution is still out there.
 \begin{problem}
Given an orbit of the
$k$-homogeneous group $G$ on $(n-k)$-partitions,
what is the subgroup of the normalizer of $G$, in $S_n$, which
fixes that orbit?
 \end{problem}

The results on normalizers of $2$-homogeneous groups suggest the following generalization.

\begin{problem}
Let $G$ be a family of primitive groups that has been classified (e.g., primitive groups of rank $3$). Build for the groups in that family a table similar to Table \ref{t:norms}, describing the normalizers of these groups in the symmetric groups of the same degree.
\end{problem}

In order to give a sharper version of Theorem \ref{lastmain} (2), it would be useful to classify the primitive groups having a $2$-homogeneous normalizer in $S_n$. 
\begin{problem}
Classify the primitive groups $G\le S_n$ such that $N_{S_n}(G)$ is $2$-homogeneous. 
\end{problem}

  The next conjecture seems a little wild, but might be true. 
  
  \begin{problem}
  Let $G$ be a primitive group and $t\in T_n\setminus S_n$. Then all the automorphisms of $\langle G,t\rangle$ are induced (under conjugation) by the elements in $N_{S_n}(\langle G,t\rangle)$.
  \end{problem}
  
  If $t$ has rank, at least, $n-4$, then we know (by \cite{ABCRS}) that $\langle G,t\rangle$ contains all the constant maps and hence (by \cite{Sullivan}) the conjecture above holds. Similarly, if $t$ has rank at most $2$, we know by \cite{neu} that   again $\langle G,t\rangle$ contains all the constant maps and the conjecture holds as above. Also by \cite{neu} we know that there are primitive groups $G$ and singular maps $t$ such that $\langle G,t\rangle$ do not contain the constant maps. The question is: what happens with the automorphisms of those semigroups? We conjecture they are all induced by the elements of the normalizer (in $S_n$) of the semigroup, but have no proof. 

{
As shown above, every $2$-homogeneous group is $2$-generated. However the GAP library of $2$-transitive groups contains default sets of generators that in the major part of the cases have size larger than $2$.
\begin{problem} Produce a library of generating sets of size $2$ for all the degree $n$ and  $k$-homogeneous  primitive groups in the GAP library (for $k\ge 2$). 
\end{problem}
}

{
Slightly connected to the previous problem is the following.
\begin{problem} Include in GAP a very effective function to find the homogeneity of  a given permutation group. 
\end{problem}
}

{
The next problem deals again with GAP libraries.
\begin{problem}
Let $G$ be a $k$-homogeneous degree $n$ primitive group in the GAP library of primitive groups. 
Produce a minimal set $A$ of degree $n$ transformations of rank $k$ such that $\langle G,A\rangle$ generates all the transformation of rank at most $k$. 
\end{problem}
}

{
In \cite{Le96} it is proposed the problem of finding the groups that can be the normalizers in $S_n$ of some semigroup $S\subseteq T_n$. The main theorems of this paper provide some answers for that question, but we would like to propose the following conjecture.
\begin{problem}		
Is it true that a group $G$ is the normalizer in $S_n$ of a semigroup $S\le T_n$ if and only if $G$ is the normalizer  in $S_n$ of some group $H\le S_n$? 
\end{problem}			
}

{
Our final problem asks for a sharper version of Theorem \ref{lastmain}, (2). 
\begin{problem}
For every pair $(G,\lambda)$, where  $G\le S_n$ is a $k$-homogeneous group and  $\lambda$ is an $(n-k)$-partition of $n$, classify the groups $N_{S_n}(\langle G,t\rangle)$, where $t$ is any map whose kernel has type $\lambda$. 
\end{problem}
}
 
 {\bf Acknowledgement.} 
This work was developed within FCT project CEMAT-CI\^{E}NCIAS (UID/Multi/04621/2013).

\end{document}